
\documentclass[12pt,twoside,reqno]{amsart}

\linespread{1.05}

\usepackage[colorlinks=true,citecolor=blue]{hyperref}
\usepackage{mathptmx,amsmath,amssymb,amsfonts,amsthm,mathptmx,enumerate,color,mathrsfs}

\setlength{\textheight}{23cm}
\setlength{\textwidth}{16cm}
\setlength{\oddsidemargin}{0cm}
\setlength{\evensidemargin}{0cm}
\setlength{\topmargin}{0cm}

\usepackage{graphicx}
\usepackage{multirow}
\usepackage{epstopdf}
\usepackage{multicol}
\usepackage{algorithm}
\usepackage{algorithmic}
\usepackage{epstopdf}


\newtheorem{theorem}{Theorem}[section]
\newtheorem{lemma}[theorem]{Lemma}

\theoremstyle{definition}
\newtheorem{definition}[theorem]{Definition}

\newtheorem{remark}[theorem]{Remark}
\numberwithin{equation}{section}


\DeclareMathOperator{\mes}{meas}
\DeclareMathOperator{\sgn}{sgn}


\begin{document}

\setcounter{page}{1}


\vspace*{2.0cm}
\title[Near-optimal control of a stochastic SICA model with imprecise parameters]{%
Near-optimal control of a stochastic SICA model\\ with imprecise parameters}
\author[H. Zine, D. F. M. Torres]{Houssine Zine, Delfim F. M. Torres$^{*}$}
\maketitle
\vspace*{-0.6cm}


\begin{center}
{\footnotesize
Center for Research and Development in Mathematics and Applications (CIDMA),\\
Department of Mathematics, University of Aveiro, 3810-193 Aveiro, Portugal\\[0.3cm]

(In memory of Professor Jack Warga on the occasion of his 100th birthday)}
\end{center}


\vskip 4mm {\footnotesize \noindent {\bf Abstract.}
An adequate near-optimal control problem for a stochastic 
SICA (Susceptible--Infected--Chronic--AIDS)
compartmental epidemic model for HIV transmission
with imprecise parameters is formulated and investigated. 
We prove some estimates for the state and co-state variables
of the stochastic system. The established inequalities are 
then used to prove a necessary and a sufficient condition
for near-optimal control with imprecise parameters. 
The proofs involve several mathematical and stochastic tools, 
including the Burkholder--Davis--Gundy inequality.

\noindent {\bf Keywords.}
Near-optimal control; 
Stochastic Pontryagin's maximum principle; 
Imprecise parameters; 
Burkholder--Davis--Gundy inequality.}


\renewcommand{\thefootnote}{}
\footnotetext{ $^*$Corresponding author.
\par
E-mail address: zinehoussine@ua.pt (H. Zine), delfim@ua.pt (D. F. M. Torres).
\par
Received June, 27, 2022; Revised and Accepted September, 27, 2022.
	
\rightline{This is a preprint of a paper whose final and definite 
form is published in \emph{Commun. Optim. Theory}}}


\section{Introduction}

Despite the advance of medical knowledge and technology, 
infectious diseases remain a growing threat to mankind.
Indeed, various kinds of infectious diseases, including hepatitis C, 
HIV/AIDS and COVID-19, spread around the globe.
Therefore, decision-makers and public health systems build up different 
strategies to curb the spread of diseases. Mathematical modeling is 
nowadays an important tool in analyzing the growth and in controlling 
pandemics \cite{MyID:461,MyID:468}.

The infection by human immunodeficiency virus (HIV) is still a major public issue, 
where no cure or vaccine exists for the acquired immunodeficiency syndrome (AIDS). 
However, antiretroviral (ART) treatment improves health, prolongs life, and diminishes 
the risk of HIV infection. Several mathematical models have been proposed 
for HIV/AIDS transmission dynamics with ART control measures, see, e.g., 
\cite{MR4376318,MR4175342} and references therein. Here we are mainly motivated 
by \cite{R1}, where a stochastic SICA (Susceptible--Infected--Chronic--AIDS)
compartmental epidemic model for HIV transmission is proposed and investigated; 
and by \cite{R10}, where SICA modeling is shown to be 
a very useful tool with respect to real applications of HIV/AIDS.

The SICA model was introduced by Silva and Torres in 2015, 
as a sub-model of a general Tuberculosis and HIV/AIDS 
co-infection problem \cite{MR3392642}. After that, 
it has been extensively used to investigate HIV/AIDS,
in different settings and contexts, using fractional-order 
derivatives \cite{MR3980195}, stochasticity \cite{R1} 
and discrete-time operators \cite{MR4276109}, 
and adjusted to different HIV/AIDS epidemics, 
as those in Cape Verde \cite{R10} 
and Morocco \cite{MR3999700}. 

In \cite{R1}, Djordjevic, Silva and Torres propose a stochastic SICA model 
by means of white noise (Brownian motion with positive intensity) 
due to environment fluctuations that perturb the coefficient rate 
of transmission $\beta$ into $\beta+\delta B_t$ \cite{R5,R7,R12}.
The stochastic SICA model of Djordjevic et al. has no control 
and is given as follows \cite{R1}:
\begin{equation}
\label{1}
\left\lbrace
\begin{aligned}
dS(t) &=\left[ \Lambda -\beta \left( I(t) +\eta _{C} C(t) 
+\eta_{A} A(t) \right) S(t) -\mu S(t) \right] dt\\
&\quad -\delta \left( I(t) +\eta_{C} C(t) 
+\eta _{A} A(t) \right) S(t) dB(t),\\
dI(t) &=\left[ \beta \left( I(t) +\eta_{C} C\left(t\right) 
+\eta _{A} A(t) \right) S(t) 
-\varepsilon_{3}I(t) +\alpha A(t) +\omega C(t)\right] dt\\
&\quad +\delta \left( I(t) +\eta _{C} C(t) 
+\eta _{A} A(t) \right) S(t) dB(t),\\
dC(t)& =\left[ \phi I(t) -\varepsilon _{2}C\left( t\right) \right] dt,\\
dA(t) &=\left[ eI(t) -\varepsilon _{1}A\left( t\right) \right] dt,
\end{aligned}
\right.
\end{equation}
with $\varepsilon_{1}=\alpha +\mu +d$,
$\varepsilon_{2}=\omega +\mu$
and $\varepsilon_{3}=e+\phi +\mu$,
and where the meaning of the parameters
$\Lambda$, $\beta$, $\mu$, 
$\eta_A$, $\eta_C$, $\phi$, $e$, $\alpha$, 
$\omega$, $C$ and $d$ is given in Table~\ref{tab1}.
\begin{table}[h!]
\label{tab1}
\caption{Parameters of the stochastic 
SICA model \eqref{1} and their meaning.}
\centering
\begin{tabular}{|c | c |} \hline
Parameters &    Meaning  \\ \hline \hline
$\Lambda$ &  Recruitment rate\\ \hline
$\beta$ & Transmission rate\\ \hline
$\mu$  &  Natural death rate \\ \hline
$\eta_A$ &  Relative infectiousness of
individuals with AIDS symptoms\\ \hline
$\eta_C$   & Partial restoration of immune function of individuals \\ 
           &    with HIV infection that use ART correctly\\ \hline
$\phi$ & HIV  treatment rate for $I$ individuals\\ \hline
$e$ & Default treatment rate for $I$ individuals\\ \hline
$\alpha$ & AIDS treatment rate\\ \hline
$\omega$ & Default treatment rate for $C$ individuals\\ \hline
$d$ & AIDS induced death rate\\ \hline
\end{tabular}
\end{table}

Thus, in SICA modeling, the human population is subdivided 
into four exhaustively and mutually exclusive compartments: 
susceptible individuals ($S$); HIV-infected individuals ($I$) with no clinical symptoms of AIDS 
(the virus is living or developing in the individuals but without producing symptoms or only mild ones) 
but able to transmit HIV to other individuals; HIV-infected individuals under ART treatment 
(the so called chronic stage) with a viral load remaining low ($C$); and HIV-infected  
individuals with AIDS clinical symptoms ($A$). The total population at time $t$, 
denoted by $N(t)$, is given by $N(t)=S(t)+I(t)+C(t)+A(t)$.

In Section~\ref{sec2}, we apply to the SICA model \eqref{1}
a meaningful control $u$. The Hamiltonian function is then introduced 
and we end up by recalling the Pontryagin maximum principle. 
Motivated by \cite{R13,R9}, our main results 
are then given in Section~\ref{sec3},
where we replace known biological parameters by imprecise ones, 
taking into account all possible environment perturbations.
Near-optimal control is considered and,
with the help of different mathematical techniques, 
like the inequalities of Cauchy--Schwartz, H\"{o}lder, 
and Burkholder--Davis--Gundy, It\^{o}'s formula, 
convexity, and Ekeland's variational principle, 
we prove estimates for the state (Lemmas~\ref{3.2} and \ref{3.3})
and co-state variables of the system (Lemmas~\ref{3.4} and \ref{3.5}),
a necessary (Theorem~\ref{Th2}) and a sufficient condition 
for near-optimal control (Theorem~\ref{Th3}). 
We end with Section~\ref{sec4} of conclusion.


\section{Optimal control of the stochastic SICA model}
\label{sec2}

To obtain an adequate stochastic controlled SICA model, 
we introduce a control into \eqref{1}
using the procedure presented in \cite{R9} 
for the stochastic SIRS model. In concrete, we propose 
the following dynamical control system:
\begin{equation}
\label{2}
\left\lbrace
\begin{aligned}
dS(t) &=\left[ \Lambda -\beta \left( I(t) +\eta_{C} C(t) 
+\eta _{A} A(t) \right) S(t)
-\mu S(t) \right] dt\\
&\quad -\delta \left( I(t) +\eta_{C} C(t) 
+\eta_{A} A(t) \right) S(t) dB(t), \\
dI(t) &=\left[ \beta \left( I(t) +\eta _{C} C\left(t\right) 
+\eta_{A} A(t) \right) S(t) -\varepsilon_{3}I(t) 
+\alpha A(t) +\omega C(t) 
-\dfrac{mu(t) I(t) }{1+\gamma I(t) }\right] dt \\
&\quad +\delta \left( I(t) +\eta _{C} C(t) +\eta_{A}
A(t) \right) S(t) dB(t),\\
dC(t) &=\left[ \phi I(t) -\varepsilon _{2}C\left(t\right) 
+\dfrac{mu(t) I(t) }{1+\gamma I\left(
t\right) }\right] dt, \\
dA(t) &=\left[ eI(t) -\varepsilon _{1}A\left(
t\right) \right] dt.
\end{aligned}
\right.
\end{equation}
Here $m$ represents the adjustment coefficient of the control. 
To simplify the writing and the analysis, we define
the vector $x = (x_1, x_2, x_3, x_4)$ as
$$
x(t) :=\left( S(t) ,I(t) ,C\left(t\right),A(t) \right)
$$
and
\begin{equation}
\label{5}
\begin{split}
f_{1}\left( x \right) &=\Lambda -\beta \left( x_{2}+\eta_{C} x_{3}
+\eta_{A} x_{4}\right) x_{1}-\mu x_{1}, \\
f_{2}\left(x,u\right)  &=\beta \left(x_{2}+\eta_{C} x_{3}
+\eta_{A} x_{4}\right) x_{1}-\varepsilon_{3}x_{2}
+\alpha x_{4}+\omega x_{3}-\dfrac{m u x_{2}}{1+\gamma x_{2}},\\
f_{3}\left( x, u \right)  &=\phi x_{2}-\varepsilon _{2}x_{3}
+\dfrac{m u x_{2}}{1+\gamma x_{2}},\\
f_{4}\left( x \right) &=ex_{2}-\varepsilon _{1}x_{4,} \\
\sigma _{1}\left( x \right)  &=-\sigma _{2}\left( x \right)
=-\delta \left( x_{2}+\eta_{C} x_{3}+\eta _{A} x_{4}\right) x_{1}.
\end{split}
\end{equation}
With these notations, we write our SICA control system \eqref{2} as
\begin{equation}
\label{4}
\left\lbrace
\begin{aligned}
dx_{1}(t) &= f_{1}\left( x(t) \right) dt+\sigma _{1}\left( x(t) \right) dB_{t}, \\
dx_{2}(t) &= f_{2}\left( x(t),u(t) \right) dt+\sigma _{2}\left( x(t) \right) dB_{t}, \\
dx_{3}(t) &= f_{3}\left( x(t),u(t) \right) dt,  \\
dx_{4}(t) &= f_{4}\left( x(t)\right) dt.
\end{aligned}
\right.
\end{equation}

Now, we introduce the stochastic objective/cost functional
$J\left( u\right)$ as
\begin{equation}
\label{6}
J\left( u\right) =E\left\{ \int_{0}^{T}L\left( x(t) ,u\left(
t\right) \right) dt+h\left( x(T) \right) \right\}
\end{equation}
with $L:\mathbb{R}^{4}\times \mathbb{R}\longrightarrow \mathbb{R}$
and $h:\mathbb{R}^{4}\rightarrow\mathbb{R}$ 
assumed to be continuously differentiable.

Let $U\subseteq \mathbb{R}$ be a given bounded nonempty closed set.
A control $u :\left[ 0,T\right] \longrightarrow U$ 
is called admissible if it is an $F_{t}$-adapted process 
with values in $U$. The set of all admissible controls is denoted by $U_{ad}$.

Our optimal control problem consists to find an admissible control that
minimizes the objective functional $J\left( u\right)$ subject to the control
system \eqref{4} and a given initial condition $x(0) = x_{0}$.

Associated to our stochastic optimal control problem, 
we define the stochastic Hamiltonian function by
\begin{equation}
\label{9}
H\left(x,u,p,q\right) =\left\langle f\left( x,u\right) ,p\right\rangle
+\left\langle \sigma ,q\right\rangle -L\left( x,u\right),
\end{equation}
where $\sigma =\left( \sigma _{1},\sigma _{2}\right)$
and $\left\langle \cdot , \cdot \right\rangle$
denotes the Euclidean inner product.

The stochastic Pontryagin's Maximum Principle \cite{MR0186436,R7}
asserts that if $u^{\ast}$ is an optimal control and
$x^{\ast}$ is the corresponding optimal trajectory, then 
there exists nontrivial multipliers $p$ and $q$ such that
the Hamiltonian system 
\begin{equation}
\label{10:a}
\left\lbrace
\begin{aligned}
dx^{\ast }(t) &=\frac{\partial H}{\partial p}\left( x^{\ast}(t),
u^{\ast }(t),p(t),q(t)\right) dt+\sigma\left( x^{\ast }(t) \right)dB_{t},\\
dp_{i}(t) &=-\frac{\partial H}{\partial x_i}\left(x^{\ast}(t),u^{\ast }(t),
p(t),q(t)\right) dt +q_{i}(t) dB_{t}, \quad i = 1, 2, 3, 4,  
\end{aligned}
\right.
\end{equation}
holds together with the maximality condition
\begin{equation}
\label{10:b}
H\left( x^{\ast }(t),u^{\ast }(t),p(t),q(t)\right) 
=\max\limits_{v\in U}H\left(x^{\ast}(t),v,p(t),q(t)\right)
\end{equation}
and the initial state and terminal costate conditions
\begin{equation}
\label{11}
x^{\ast }\left( 0\right) =x_{0},
\quad p_{i}(T) =-\frac{\partial h}{\partial x_{i}}\left(x^{\ast }(T) \right).
\end{equation}
Note that in our problem the diffusion term $\sigma$ does not depend
on the control $u$.

In Section~\ref{sec3}, we transform \eqref{2}
into a near-optimal control problem 
via the incorporation of imprecise parameters.


\section{Near-optimal control with imprecise parameters}
\label{sec3}

In the majority of available mathematical epidemic models, 
the parameter values are assumed to be precisely known. 
Nevertheless, in real applications, one needs to take into account 
the influence of numerous uncertainties. This motivate us to 
include here, for the first time in the literature of SICA modeling,
imprecise parameters into the stochastic SICA model 
and to consider the problem of near-optimal control. 
Before that, we need some preliminary notions.


\subsection{Preliminaries}
\label{sec3.1}

To make it explicitly that in our optimal control problem
we begin at time $t = 0$ with the given initial state $x(0) = x_0$,
from now on we denote the cost functional $J\left( u\right)$
defined in \eqref{6} by $J(0,x_0,u)$. Moreover,
the minimum of functional \eqref{6} is denoted by
$V(0, x_0)$, that is, 
$$
V(0, x_0) = \min_{u \in U_{ad}} J(0,x_0,u).
$$
Function $V$ is known in the literature as the value function.

We recall some known definitions available
in the literature: see, e.g., \cite{R2,R9}. 

\begin{definition}[optimal control]
\label{(D1)}
An admissible control $u^*$ is called optimal if 
$$
J(0,x_0,u^*) = V(0, x_0).
$$
\end{definition}

\begin{definition}[$\epsilon$-optimal control]
\label{(D2)}
Let $\epsilon>0$. An admissible control $u^\epsilon$ 
is called  $\epsilon$-optimal if
$$
\mid J(0,x_0,u^\epsilon)-V(0,x_0)\mid\leq\epsilon.
$$
\end{definition}

\begin{definition}[near-optimal control]
\label{(D3)}
Consider a family of admissible controls $\{u^\epsilon\}$ parameterized 
by $\epsilon >0$ and let $u^\epsilon$ be any element in this family. 
We say that $u^\epsilon$ is a near-optimal control if
$$
\mid J(0,x_0,u^\epsilon)-V(0,x_0)\mid\leq \delta(\epsilon)
$$
holds for a sufficient small $\epsilon >0$, 
where $\delta$ is a function of $\epsilon$ satisfying 
$\delta(\epsilon)\rightarrow0$ as $\epsilon\rightarrow 0$.
The estimate $\delta(\epsilon)$ is called an error bound.
If $\delta(\epsilon)= r\epsilon^\omega$, for some $r,\omega>0$, 
then $u^\epsilon$ is said to be a near-optimal control 
of order $\epsilon^\omega$.
\end{definition}

\begin{definition}[interval numbers]
\label{(D4)}
An interval number $A$ is represented by a closed interval $[a_l,a_u]$ defined by
$$
A=[a_l,a_u]=\left\{x\in\mathbb{R} : a_l\leq x \leq a_u \right\},
$$
where $a_l$ and $a_u$ are the lower and the upper limits of the interval number, 
respectively. We represent an interval $[a,b]$ by the so called 
interval-valued function, which is given by
$$
h(k)=a^{1-k}b^k, \quad k\in [0,1].
$$
\end{definition}

\begin{remark}
The sum, difference, product and division of two interval numbers 
are also interval numbers. 
\end{remark}


\subsection{The stochastic SICA control model with imprecise parameters}
\label{sec3.2}

We assume that the stochastic SICA model has some biological imprecise parameters.
The uncertain parameters are described by interval numbers. 
After replacing each parameter $\zeta$ with an imprecise one, 
$\zeta_k=(\zeta_l)^{1-k}(\zeta_u)^k \in [\zeta_l,\zeta_u]$ for $k\in [0,1]$,
our control system \eqref{2} becomes:
\begin{equation}
\label{13}
\left\lbrace
\begin{aligned}
dS(t) &=\left[ \Lambda_k -\beta_k \left( I(t) +(\eta_{C})_k C(t) 
+(\eta _{A})_k A(t) \right) S(t) -\mu_k S(t) \right] dt\\
&\quad -\delta_k \left( I(t) +(\eta_{C})_k C(t) +(\eta _{A})_k A(t) \right) S(t) dB(t), \\
dI(t) &=\Biggl[ \beta _k\left( I(t) +(\eta _{C})_k C\left(t\right)
+(\eta _{A})_k A(t) \right) S(t) -(\varepsilon_{3})_k I(t)\\
&\quad +\alpha_k A(t) +\omega_k C(t)-\dfrac{m_k u(t) I(t)}{1+\gamma_k I(t)}\Biggr] dt \\
& \quad +\delta_k \left( I(t) +(\eta _{C})_k C(t) +(\eta_{A})_k A(t) \right) S(t) dB(t),\\
dC(t) &=\left[ \phi_k I(t) -(\varepsilon _{2})_k C\left(t\right) 
+\dfrac{m_ku(t) I(t) }{1+\gamma_k I\left(t\right) }\right] dt, \\
dA(t) &=\left[ e_k I(t) -(\varepsilon_{1})_k A\left(t\right) \right] dt.
\end{aligned}
\right.
\end{equation}

Let $u$ and $u'\in U_{ad}$. Set the following metric on $U_{ad}[0,T]$: 
\begin{equation}
\label{eq:metric:d}
d(u,u')=E\mes \{{t\in [0,T]:u(t) \neq u'(t)}\},
\end{equation}
where ``$\mes$'' represents the Lebesgue measure.
Note that since $U$ is closed, it follows that $U_{ad}$ 
is a complete metric space under $d$.

Next we prove estimates of the state and co-state variables. Let 
\begin{equation}
\label{eq:Omega}
\Omega=\left\{(S(t),I(t), C(t),A(t))\in \mathbb{R}_+^4 :
\dfrac{\Lambda_k}{\mu_k+d_k}\leq N(t)
\leq  \dfrac{\Lambda_k}{\mu_k}\right\}\subset \mathbb{R}_+^4,
\end{equation}
where $N(t)=S(t)+I(t)+C(t)+A(t)$. Shortly, our
Lemma~\ref{3.2} asserts that the trajectories of \eqref{13} 
will enter and remain in $\Omega$ with probability 1.

\begin{lemma}
\label{3.2}
For any $\theta\geq 0$ and $u\in U_{ad}$, we have
$$
E\underset{0\leq t\leq r}{\sup}(\mid S(t)\mid^\theta
+\mid I(t)\mid^\theta+\mid C(t)\mid^\theta+\mid A(t)\mid^\theta)\leq R,
$$
where $R$ is an imprecise parameter depending only on $\theta$.
\end{lemma}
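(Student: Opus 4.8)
The plan is to exploit the conservation-law structure of the SICA dynamics rather than to estimate the moments of each compartment directly. A naive application of It\^o's formula and the Burkholder--Davis--Gundy inequality to $|x_i(t)|^\theta$ would be awkward here, because the incidence term $\beta_k\bigl(x_2+(\eta_C)_k x_3+(\eta_A)_k x_4\bigr)x_1$ is quadratic and would leave an uncontrolled term on the right-hand side. The key observation is that the total population $N(t)=S(t)+I(t)+C(t)+A(t)$ satisfies a closed equation without diffusion, which gives a deterministic pathwise bound on $N(t)$, hence on each compartment.

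First I would check that, for every $u\in U_{ad}$ and every initial datum $x_0\in\mathbb{R}_+^4$, system \eqref{13} admits a unique global solution taking values in $\mathbb{R}_+^4$ almost surely. Local existence and uniqueness follow from local Lipschitzness of the coefficients, and invariance of the positive orthant follows because on each face of $\mathbb{R}_+^4$ the corresponding drift points inward (for instance $f_1=\Lambda_k>0$ when $x_1=0$, and $f_2=\alpha_k x_4+\omega_k x_3\ge 0$ when $x_2=0$); this is exactly the mechanism used for the uncontrolled model in \cite{R1}, and it is not affected by the bounded control or by replacing the parameters with interval numbers.

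Next I would add the four equations of \eqref{13}. Since $\sigma_1=-\sigma_2$ and the components $C,A$ carry no noise, the martingale parts cancel; the two control terms $\pm\,m_k u(t)I(t)/(1+\gamma_k I(t))$ cancel as well; and using $(\varepsilon_1)_k=\alpha_k+\mu_k+d_k$, $(\varepsilon_2)_k=\omega_k+\mu_k$, $(\varepsilon_3)_k=e_k+\phi_k+\mu_k$ one gets
\[
dN(t)=\bigl(\Lambda_k-\mu_k N(t)-d_k A(t)\bigr)\,dt .
\]
Using $0\le A(t)\le N(t)$ a.s. (from the first step), this gives the pathwise inequalities $\Lambda_k-(\mu_k+d_k)N(t)\le \tfrac{d}{dt}N(t)\le \Lambda_k-\mu_k N(t)$, so a comparison/Gr\"onwall argument yields $\Lambda_k/(\mu_k+d_k)\le\liminf_{t\to\infty}N(t)$ and
\[
N(t)\le N(0)\,e^{-\mu_k t}+\frac{\Lambda_k}{\mu_k}\bigl(1-e^{-\mu_k t}\bigr)\le \max\Bigl\{N(0),\frac{\Lambda_k}{\mu_k}\Bigr\}=:K
\]
for all $t\ge 0$, almost surely; in particular this justifies the assertion after \eqref{eq:Omega} that the trajectory enters and remains in $\Omega$ with probability one.

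Finally, since $0\le S(t),I(t),C(t),A(t)\le N(t)\le K$ pathwise and uniformly in $t$, for any $\theta\ge 0$ we have $|S(t)|^\theta+|I(t)|^\theta+|C(t)|^\theta+|A(t)|^\theta\le 4K^\theta$, and taking the supremum over $0\le t\le r$ and then the expectation gives the result with $R=4K^\theta$; if the initial datum lies in $\Omega$, then $K=\Lambda_k/\mu_k$ and $R=4(\Lambda_k/\mu_k)^\theta$ is an imprecise parameter depending only on $\theta$, as claimed. The one genuinely delicate point is the first step — global-in-time existence together with a.s.\ positivity of the solution — which needs a careful stopping-time / Lyapunov argument in the spirit of \cite{R1}; the remaining steps are elementary, and the Burkholder--Davis--Gundy inequality is not required for this lemma (it will enter in the co-state estimates, Lemmas~\ref{3.4} and~\ref{3.5}).
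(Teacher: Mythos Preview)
Your proposal is correct and follows essentially the same approach as the paper: both add the four equations of \eqref{13} to obtain the closed deterministic equation $dN/dt=\Lambda_k-\mu_k N-d_k A$ for the total population, derive the differential inequalities from $0\le A\le N$, and conclude pathwise boundedness of each compartment. Your write-up is actually a bit more careful than the paper's --- you explicitly note the cancellation of the diffusion and control terms, extract the finite-time bound $N(t)\le\max\{N(0),\Lambda_k/\mu_k\}$ rather than only the asymptotic $\limsup$/$\liminf$ statements, and spell out $R=4K^\theta$ --- but the underlying argument is the same.
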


\begin{proof}
Adding member to member all equations 
of the system \eqref{13}, we get
$$
\dfrac{dN(t)}{dt}=\Lambda_k-\mu_k N(t)-d_k A(t).
$$
Because $d_k A(t)\geq 0$, it follows that
$$
\dfrac{dN(t)}{dt}\leq \Lambda_k-\mu_k N(t).
$$
Multiplying by $\exp(u_k t)$ both sides of this inequality, we obtain that
$$
\exp(u_k t) \, \dfrac{dN(t)}{dt} \leq \Lambda_k \exp(u_k t)
-\mu_k \exp(u_k t) N(t).
$$
An integration by parts between $0$ and $t$ leads to
$$
N(t)\exp(u_k t)-N(0)-\int_0^t N(s)\exp(u_k s)ds 
\leq \dfrac{\Lambda_k}{\mu_k}(exp(u_k t)-1)
-\int_0^t \mu_k \exp(u_k s) N(s)ds.
$$
Equivalently, we have
$$
N(t) \leq \dfrac{\Lambda_k}{\mu_k}(1-\exp(-u_k t))+N(0)\exp(-u_k t)
$$
and thus
$$
\underset{t\rightarrow+\infty}{\limsup} \, N(t)\leq \dfrac{\Lambda_k}{\mu_k}.
$$
We also have
$$
\dfrac{dN(t)}{dt}\geq \Lambda_k-(\mu_k+d_k) N(t)
$$
and, following the same arguments as before, we get
$$
\underset{t\rightarrow+\infty}{\liminf}\, N(t)\geq \dfrac{\Lambda_k}{\mu_k+d_k}.
$$
We conclude that 
$$
\dfrac{\Lambda_k}{\mu_k+d_k}\leq \underset{t\rightarrow+\infty}{\liminf}\, N(t)
\leq \underset{t\rightarrow+\infty}{\limsup}\, N(t)\leq  \dfrac{\Lambda_k}{\mu_k}.
$$
This means that all solutions $S(t)$, $I(t)$, $C(t)$, and $A(t)$ of our model \eqref{13} 
are almost surely bounded over the positively invariant bounded set \eqref{eq:Omega}.
\end{proof}

In what follows we use the notation
$x(t)=(x_1(t),x_2(t), x_3(t), x_4(t))=(S(t),I(t), C(t), A(t))$
for our system \eqref{13}.

\begin{lemma}
\label{3.3}
Let $\theta \geq 0$ and $0<k<1$ with $k\theta<1$. 
If $u, u'\in U_{ad}$ and $x$ and $x'$
are the corresponding state trajectories, 
then there exists an imprecise parameter 
$R=R(\theta,k)$ such that
\begin{equation}
\label{eq:Lema3.7}
\sum_{i=1}^4E\underset{0\leq t\leq T}{\sup}\mid x_i(t)
-x'_i(t)\mid ^{2\theta}\leq R d(u(t),u'(t))^{k\theta}.
\end{equation}
\end{lemma}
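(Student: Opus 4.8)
The plan is to control the difference of the two state trajectories by writing an It\^o integral equation for $x_i(t)-x_i'(t)$, splitting the drift into a part where the same control appears in both $f_j$ and the control-mismatch part, and then iterating Gr\"onwall-type estimates after an application of the Burkholder--Davis--Gundy inequality to the martingale terms. First I would fix $i$, write
\begin{equation*}
x_i(t)-x_i'(t)=\int_0^t\bigl(f_i(x(s),u(s))-f_i(x'(s),u'(s))\bigr)ds+\int_0^t\bigl(\sigma_i(x(s))-\sigma_i(x'(s))\bigr)dB_s
\end{equation*}
(with the convention $\sigma_3=\sigma_4=0$), raise to the power $2\theta$, take $\sup_{0\le t\le r}$ and then expectation, and use the elementary inequality $|a+b|^{2\theta}\le C_\theta(|a|^{2\theta}+|b|^{2\theta})$. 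For the drift term I would split $f_i(x(s),u(s))-f_i(x'(s),u'(s))$ as $\bigl(f_i(x(s),u(s))-f_i(x'(s),u(s))\bigr)+\bigl(f_i(x'(s),u(s))-f_i(x'(s),u'(s))\bigr)$; the first piece is Lipschitz in the state on the invariant set $\Omega$ (the nonlinearities $\beta(\cdot)x_1$ and $mu x_2/(1+\gamma x_2)$ are all locally Lipschitz there, with imprecise-parameter Lipschitz constants), and the second piece is supported on $\{s:u(s)\ne u'(s)\}$ and bounded there because $U$ is bounded and the states are bounded by Lemma~\ref{3.2}.

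Next I would treat the control-mismatch contribution: by H\"older's inequality with exponents $1/(k\theta)$ and $1/(1-k\theta)$ (legitimate since $k\theta<1$),
\begin{equation*}
E\Bigl(\int_0^T\mathbf{1}_{\{u(s)\ne u'(s)\}}\,|f_i(x'(s),u(s))-f_i(x'(s),u'(s))|\,ds\Bigr)^{2\theta}\le C\,\bigl(E\,\mathrm{meas}\{s:u(s)\ne u'(s)\}\bigr)^{k\theta}\cdot\bigl(\text{bounded moment}\bigr),
\end{equation*}
where the bounded moment factor comes from Lemma~\ref{3.2} applied with exponent $2\theta/(1-k\theta)$, and $E\,\mathrm{meas}\{s:u\ne u'\}=d(u,u')$. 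This produces exactly the $d(u,u')^{k\theta}$ factor on the right-hand side. For the stochastic integral I would apply Burkholder--Davis--Gundy to get
\begin{equation*}
E\sup_{0\le t\le r}\Bigl|\int_0^t(\sigma_i(x(s))-\sigma_i(x'(s)))dB_s\Bigr|^{2\theta}\le C_\theta\,E\Bigl(\int_0^r|\sigma_i(x(s))-\sigma_i(x'(s))|^2ds\Bigr)^{\theta},
\end{equation*}
then Jensen/H\"older in $s$ to pull the power inside, and the local Lipschitz bound on $\sigma_i$ over $\Omega$ to dominate this by $C\int_0^r E\sup_{0\le\tau\le s}|x(\tau)-x'(\tau)|^{2\theta}ds$ (summing over components and absorbing constants).

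Collecting everything, with $\psi(r):=\sum_{i=1}^4 E\sup_{0\le t\le r}|x_i(t)-x_i'(t)|^{2\theta}$, I would arrive at an inequality of the form $\psi(r)\le R_1\,d(u,u')^{k\theta}+R_2\int_0^r\psi(s)\,ds$, and Gr\"onwall's lemma then yields $\psi(T)\le R_1 e^{R_2 T}d(u,u')^{k\theta}=:R(\theta,k)\,d(u,u')^{k\theta}$, which is \eqref{eq:Lema3.7}. The main obstacle I anticipate is the bookkeeping needed to keep the H\"older exponents consistent: one must make sure that the exponent $2\theta/(1-k\theta)$ appearing when one estimates the control-mismatch term is finite and that Lemma~\ref{3.2} is invoked with a legitimate (nonnegative) power, and simultaneously that the BDG step does not inflate $2\theta$ to something incompatible with the Gr\"onwall closure; the hypothesis $k\theta<1$ is precisely what makes these exponents admissible, so the delicate point is to route the $\mathbf{1}_{\{u\ne u'\}}$ factor through H\"older before, rather than after, invoking the moment bounds. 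A secondary technical care is that the Lipschitz constants of $f_i$ and $\sigma_i$ on $\Omega$ are themselves interval (imprecise) quantities, so the resulting $R$ is an imprecise parameter, consistent with the statement.
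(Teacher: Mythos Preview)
Your approach is essentially the same as the paper's: write the integral equation for $x_i-x_i'$, use the local Lipschitz bounds on $f_i,\sigma_i$ over the invariant region $\Omega$, isolate the control-mismatch contribution and route $\mathbf{1}_{\{u\ne u'\}}$ through H\"older to produce the factor $d(u,u')^{k\theta}$, and close with Gr\"onwall. The one point where the paper is more explicit is precisely the exponent-bookkeeping obstacle you flag at the end. Rather than attempting to make the Jensen/H\"older step (pulling the power $2\theta$ inside the time integral) work uniformly in $\theta\ge 0$, the paper first proves \eqref{eq:Lema3.7} for $\theta\ge 1$, where H\"older in $s$ gives the Gr\"onwall form directly, and then reduces the case $0\le\theta<1$ to the case $\theta=1$ via the concavity bound
\[
E\sup_{0\le t\le T}|x_i(t)-x_i'(t)|^{2\theta}\le\Bigl(E\sup_{0\le t\le T}|x_i(t)-x_i'(t)|^{2}\Bigr)^{\theta}\le\bigl(R\,d(u,u')^{k}\bigr)^{\theta}=R^{\theta}\,d(u,u')^{k\theta}.
\]
This two-case split is exactly the device that resolves your worry that ``the BDG step does not inflate $2\theta$ to something incompatible with the Gr\"onwall closure''; without it, for small $\theta$ the inequality you need to pull the exponent inside the $ds$-integral goes the wrong way. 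Incorporating that split, your outline matches the paper's proof.
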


\begin{proof}
Let us first suppose that $\theta \geq 1$. 
From system \eqref{13}, and in view of 
H\"{o}lder's inequality, for $\theta\geq 1$ we have
\begin{equation}
\label{ineq:01}
\begin{aligned}
E \underset{0\leq t \leq r}{\sup}\mid x_1(t)-x'_1(t)\mid^{2\theta}
&\leq R E\int_0^r[(\beta_k^{2\theta}+\delta_k^{2\theta})\mid 
(x_2+(\eta_C)_k x_3+(\eta_A)_k x_4) x_4\\
&\quad -((x'_2+(\eta_C)_k x'_3+(\eta_A)_k x{'}_4)) x_4\mid^{2\theta}
+(\mu_k)^{2\theta}\mid x_1(t)-x_1{'}(t)\mid^{2\theta}]dt\\
&\leq R E\int_0^r \sum_{i=1}^4\mid x_i-x{'}_i\mid^{2\theta} dt.
\end{aligned}
\end{equation}
Since
\begin{equation*}
\begin{aligned}
E\int_0^r \chi_{u(t)\neq u'(t)}dt
&\leq  \left(E \int_0^r dt\right)^{1-k\theta}\left( 
E\int_0^r \chi_{u(t)\neq u'(t)}dt\right)^{k\theta}\\
&\leq R(\theta,k)\left( E \mes\{t/ u(t)\neq u'(t)\}\right)^{k \theta}\\
&\leq R d(u,u')^{k\theta},
\end{aligned}
\end{equation*}
we also have
\begin{equation}
\label{ineq:02}
\begin{aligned}
E \underset{0\leq t \leq r}{\sup}\mid x_2(t)-x'_2(t)\mid^{2\theta}
&\leq R E\int_0^r\Biggl[(\beta_k^{2\theta}+\delta^{2\theta})
\mid (x_2+(\eta_C)_k x_3+(\eta_A)_k x_4) x_4\\
&\quad - ((x'_2+(\eta_C)_k x'_3+(\eta_A)_k x{'}_4)) x_4 \mid^{2\theta}\\
&\quad +(\epsilon_3)_k^{2\theta}\mid x_2(t)-x{'}_2(t)
\mid^{2\theta}+\alpha_k^{2\theta}\mid x_4(t)-x'_4(t)\mid^{2\theta}\\
&\quad  + \omega_k^{2\theta} \mid x_3(t)-x{'}_3(t)\mid^{2\theta}\\
&\quad  + m_k^{2\theta}\left| \dfrac{u(t) x_2(t)}{1+\eta_k x_2}
-\dfrac{u'(t) x_2'(t)}{1+\eta_k x'_2}\right|^{2\theta}\Biggr]dt\\
&\leq R E\int_0^r \sum_{i=1}^4\mid x_i-x{'}_i\mid^{2\theta}
+ R E\int_0^r \chi_{u(t)\neq u'(t)}dt\\
&\leq R\left[E\int_0^r \sum_{i=1}^4\mid 
x_i-x{'}_i\mid^{2\theta}dt+d(u,u')^{k\theta}\right].
\end{aligned}
\end{equation}
From H\"{o}lder's inequality under the hypothesis $k \theta <1$,
we also find that
\begin{equation}
\label{ineq:03}
E \underset{0\leq t \leq r}{\sup}\mid x_3(t)-x'_3(t)\mid^{2\theta}
\leq R\left[E\int_0^r \sum_{i=1}^4\mid x_i
-x{'}_i\mid^{2\theta}dt+d(u,u')^{k\theta}\right]
\end{equation}
and
\begin{equation}
\label{ineq:04}
E \underset{0\leq t \leq r}{\sup}\mid x_4(t)
-x'_4(t)\mid^{2\theta}\leq  
R\left[E\int_0^r \sum_{i=1}^4\mid x_i-x{'}_i\mid^{2\theta}dt
+d(u,u')^{k\theta}\right].
\end{equation}
Combining the last four inequalities 
\eqref{ineq:01}--\eqref{ineq:04}, we get
$$
\sum_{i=1}^4 E \underset{0\leq t \leq r}{\sup}\mid x_i(t)
-x'_i(t)\mid^{2\theta}\leq  R\left[E\int_0^r 
\sum_{i=1}^4\mid x_i-x{'}_i\mid^{2\theta}dt+d(u,u')^{k\theta}\right].
$$
By using Gronwall's inequality, we conclude that \eqref{eq:Lema3.7} holds. 

To prove the desired result in the case $0\leq \theta < 1$, we apply 
the Cauchy--Schwartz inequality for obtaining
\begin{equation*}
\begin{aligned}
\sum_{i=1}^4 E \underset{0\leq t \leq r}{\sup}\mid x_i(t)-x'_i(t)\mid^{2\theta}
& \leq \sum_{i=1}^4\left[E \underset{0\leq t \leq r}{\sup}\mid 
x_i(t)-x'_i(t)\mid^2 \right] ^\theta\\
& \leq \left[ R d(u,u')^k\right] ^\theta\\
& \leq R^\theta d(u,u')^{k\theta},
\end{aligned}
\end{equation*}
where $R$ is the imprecise parameter. The proof is complete.
\end{proof}

Now we prove estimates for the co-state variables.

\begin{lemma}
\label{3.4}
Let $p_i$ be the co-state variables given by the stochastic 
Pontryagin's maximum principle. Then, 
\begin{equation*}
\sum_{i=1}^4 E \underset{0\leq t \leq T}{\sup}\mid 
p_i(t)\mid^2+\sum_{i=1}^2 E\int_0^T\mid q_i(t)\mid^2dt \leq R,
\end{equation*}
where $R$ is an imprecise parameter.
\end{lemma}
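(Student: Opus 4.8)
Here is the plan I would follow to establish Lemma~\ref{3.4}.

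\medskip

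\noindent\textbf{Strategy.} The co-state system \eqref{10:a}--\eqref{11} is a linear backward stochastic differential equation (BSDE) driven by $B_t$, with terminal data $p_i(T)=-\partial_{x_i}h(x^*(T))$ and drift coefficients obtained by differentiating the Hamiltonian $H$ in \eqref{9}. Explicitly, $-\partial_{x_i}H$ is a linear combination of $p_1,\dots,p_4$ and $q_1,q_2$ with coefficients that are (bounded) functions of the state trajectory $x^*(t)$ and the control $u(t)$: e.g.\ the partials of $f_1,\dots,f_4$ and of $\sigma_1,\sigma_2$ in $x_i$, all of which are smooth in $x$ and bounded on the invariant region $\Omega$ from \eqref{eq:Omega} (using that $|u(t)|\le$ const since $U$ is bounded, and that $1/(1+\gamma_k x_2)\le 1$). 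The plan is the standard BSDE a priori estimate: apply It\^{o}'s formula to $|p(t)|^2$, take expectations, and absorb the quadratic-variation term $\sum_i E\int_t^T |q_i(s)|^2\,ds$ onto the left; then close with Gronwall's inequality backward in time, and finally upgrade the pointwise-in-time bound to a $\sup_t$ bound via the Burkholder--Davis--Gundy inequality.

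\medskip

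\noindent\textbf{Key steps.} First I would write the BSDE for $p=(p_1,p_2,p_3,p_4)$ in the form $dp_i(t) = -g_i(x^*(t),u(t),p(t),q(t))\,dt + q_i(t)\,dB_t$, identify the affine structure $g_i = \sum_j a_{ij}(t)p_j + \sum_{j=1}^{2} b_{ij}(t)q_j + c_i(t)$, and record that $|a_{ij}(t)|, |b_{ij}(t)|, |c_i(t)| \le R$ for an imprecise constant $R$, using Lemma~\ref{3.2} (with $\theta$ large enough) to control any moments of $x^*$ and the boundedness of $U$ and of $h,L$ and their derivatives. Second, apply It\^{o}'s formula to $|p(t)|^2 = \sum_i p_i(t)^2$ between $t$ and $T$, giving
\begin{equation*}
|p(t)|^2 + \sum_{i=1}^{2}\int_t^T |q_i(s)|^2\,ds
= |p(T)|^2 + 2\sum_{i=1}^4\int_t^T p_i(s) g_i(s)\,ds
- 2\sum_{i=1}^{2}\int_t^T p_i(s) q_i(s)\,dB_s.
\end{equation*}
Third, take expectations (the stochastic integral is a martingale after a localization argument, so it vanishes), bound $E|p(T)|^2 \le R$ via $p_i(T) = -\partial_{x_i}h(x^*(T))$ and Lemma~\ref{3.2}, and estimate the drift cross-term by Cauchy--Schwarz and Young's inequality: $2|p_i g_i| \le R|p|^2 + \tfrac12 |q|^2 + R$, so that half of the $\int_t^T|q|^2$ term is absorbed on the left. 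This yields $E|p(t)|^2 + \tfrac12 \sum_i E\int_t^T|q_i|^2\,ds \le R + R\int_t^T E|p(s)|^2\,ds$; Gronwall (backward) gives $\sup_{0\le t\le T} E|p(t)|^2 \le R$ and then $\sum_{i=1}^2 E\int_0^T |q_i|^2\,ds \le R$. Fourth, to get $E\sup_{t}|p(t)|^2 \le R$, go back to the It\^{o} identity, take $\sup_t$ then expectation, and bound the martingale term by BDG: $E\sup_t \big|\int_t^T p_i q_i\,dB_s\big| \le C\, E\big(\int_0^T |p_i|^2|q_i|^2\,ds\big)^{1/2} \le \tfrac14 E\sup_t|p(t)|^2 + C\,E\int_0^T|q_i|^2\,ds$, and absorb the $\sup$ term left, using the already-established bound on $E\int_0^T|q|^2$.

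\medskip

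\noindent\textbf{Main obstacle.} The delicate point is the correct absorption bookkeeping in the BDG step: one must first have the finite bound on $\sum_i E\int_0^T|q_i|^2\,ds$ (from the plain $E|p(t)|^2$ estimate) before invoking BDG, and then be careful that the coefficient in front of $E\sup_t|p(t)|^2$ after applying Young's inequality to the BDG output is strictly less than $1$ so it can be moved to the left-hand side; this also tacitly requires knowing a priori that $E\sup_t|p(t)|^2<\infty$ (standard BSDE well-posedness under Lipschitz/bounded coefficients, which holds here), otherwise the absorption is not legitimate. A secondary technical nuisance is verifying the uniform (in $u\in U_{ad}$) boundedness of all Hamiltonian partial derivatives on $\Omega$, which is where the explicit SICA structure in \eqref{5} and the boundedness of $U$ and of $\gamma_k I/(1+\gamma_k I)$ are used.
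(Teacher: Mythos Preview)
Your proposal is correct and follows the standard textbook BSDE a priori estimate, but the paper takes a slightly different route to the pointwise bound. Instead of applying It\^{o}'s formula to $|p(t)|^2$, the paper writes the integrated equation $p_i(t)+\int_t^T q_i\,dB_s=p_i(T)+\int_t^T b_i\,ds$, squares both sides, and uses the orthogonality $E\bigl[p_i(t)\int_t^T q_i\,dB_s\bigr]=0$ together with Cauchy--Schwarz in time to obtain
\[
E|p_i(t)|^2+E\!\int_t^T|q_i|^2\,ds\ \le\ R\,E|p_i(T)|^2+R(T-t)\sum_j E\!\int_t^T\bigl(|p_j|^2+|q_j|^2\bigr)\,ds.
\]
Because the constant in front of the $q$-integral is $R(T-t)$ rather than a fixed fraction, the paper cannot absorb it globally; instead it restricts to $t\in[T-\varepsilon,T]$ with $\varepsilon=1/R$, applies Gronwall there, and then iterates over $[T-2\varepsilon,T]$, $[T-3\varepsilon,T]$, \dots\ in finitely many steps. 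Your use of It\^{o} on $|p|^2$ plus Young's inequality produces a fixed absorption constant $\tfrac12$ directly, so you avoid this slicing--iteration step entirely; this is cleaner and is the approach usually found in BSDE references. For the upgrade from $\sup_t E|p(t)|^2$ to $E\sup_t|p(t)|^2$ both arguments coincide: write out $p_i(t)$, take the supremum, and apply the Burkholder--Davis--Gundy inequality (you are more explicit than the paper about the absorption coefficient being $<1$, which is the right point to flag).
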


\begin{proof}
From Pontryagin's maximum principle we have
$$
dp_1(t)=-\partial_{x_{1}}H(x^*,u^*,p,q)dt+q_1(t)dB_t=-b_1dt+q_1dB_t,
$$
so that
$$
p_1(t)+\int_t^T q_1(s)dB_s=p_1(T)+\int_t^T b_1(s)ds.
$$
For all $t\geq 0$, and since by Lemma~\ref{3.2} $x^*(t)\in \Omega$,
one has
\begin{equation*}
\begin{aligned}
E\mid p_1(t)\mid^2+E\left\{\int_0^T\mid q_1(s)\mid^2ds\right\}
\leq R E\mid p_1(T)\mid^2+ R(T-t)E\left\{\int_0^T\mid b_1(s)\mid^2ds\right\}\\
\leq R E\mid p_1(T)\mid^2+ R (T-t)\sum_{i=1}^4E\left\{\int_0^T\mid 
p_i(s)\mid^2ds\right\}+ R (T-t)\sum_{i=1}^2 E\left\{\int_0^T\mid q_i(s)\mid^2ds\right\}.
\end{aligned}
\end{equation*}
Similarly, we obtain the same inequalities for $i\in\{2,3,4\}$.
By adding member to member, we get
\begin{equation*}
\begin{aligned}
&\sum_{i=1}^4E\mid p_i(t)\mid^2+\sum_{i=1}^2E\int_t^T\mid q_i(s)\mid^2ds\\
&\leq R E\mid p_i(T)\mid^2+ R (T-t)\sum_{i=1}^4E
\left\{\int_t^T\mid p_i(s)\mid^2ds\right\}
+ R (T-t)\sum_{i=1}^2E\left\{\int_t^T\mid q_i(s)\mid^2ds\right\}.
\end{aligned}
\end{equation*}
Then,
\begin{equation*}
\begin{aligned}
&\sum_{i=1}^4E\mid p_i(t)\mid^2+\frac{1}{2}\sum_{i=1}^2
E\left\{\int_t^T\mid q_i(s)\mid^2ds\right\}\\
&\qquad \leq \sum_{i=1}^4 R E\mid p_i(T)\mid^2+ R(T-t)\sum_{i=1}^4
E\left\{\int_t^T\mid p_i(s)\mid^2ds\right\},
\end{aligned}
\end{equation*}
where $t\in[T-\epsilon,T]$ and $\epsilon=\frac{1}{R}$.
Using Gronwall's inequality, we obtain that
\begin{equation}
\sum_{i=1}^4 \underset{0\leq t \leq T}{\sup}E\mid p_i(t)\mid^2\leq R,
\quad \sum_{i=1}^4 E\left\{\int_{t}^T\mid q_i(s)\mid^2ds\right\}\leq R.
\end{equation}
We also obtain the same result over $[T-2\epsilon,T]$,  $[T-3\epsilon,T]$, and so on.
Repeating for a finite number of steps, 
the expected estimate emerges for any $t\in [0,T]$. Furthermore, from 
$$
p_1(t)=p_1(T)+\int_t^Tb_1(s)ds-\int_0^Tq_1(s)dB_s+\int_0^tq_1(s)dB_s
$$
and the elementary inequality 
\begin{equation}
\label{el:ineq}
\mid m_1+m_2+m_3+m_4\mid^n
\leq 4^n\left( \mid m_1\mid^n+ \mid m_2\mid^n
+ \mid m_3\mid^n+ \mid m_4\mid^n\right),
\end{equation}
valid for any $n>0$, we have 
\begin{multline*}
\mid p_1(t)\mid^2
\leq R \Biggl[\mid p_1(T)\mid^2
+\int_0^T\left(\sum_{i=1}^4\mid p_i(s)\mid^2\right)ds
+\int_0^T\left(\sum_{i=1}^2\mid q_i(s)\mid^2\right)ds\\
+\left(\int_0^Tq_1(s)dB\right)^2
+\left(\int_0^tq_1(s)dB\right)^2\Biggr].
\end{multline*}
Analogous statements are established for $p_2$, $p_3$, and $p_4$.
Next, by addition, we get
\begin{multline*}
\sum_{i=1}^4\mid p_i(t)\mid^2
\leq R \Biggl[\sum_{i=1}^4\mid p_i(T)\mid^2
+\int_0^T\left(\sum_{i=1}^4\mid p_i(s)\mid^2\right)ds\\
+\int_0^T\left(\sum_{i=1}^2\mid q_i(s)\mid^2\right)ds
+\sum_{i=1}^2\left(\int_0^Tq_i(s)dB\right)^2
+\sum_{i=1}^2\left(\int_0^tq_i(s)dB\right)^2\Biggr].
\end{multline*}
The Burkholder--Davis--Gundy inequality \cite{R5} leads to
\begin{equation*}
\begin{aligned}
\sum_{i=1}^4E\underset{0\leq t \leq T}{\sup}\mid p_i(t)\mid^2
&\leq R\Biggl[\sum_{i=1}^4\mid p_i(T)\mid^2
+\sum_{i=1}^4E\left\{\int_0^T\underset{0\leq v \leq s}{\sup}\mid 
p_i(v)\mid^2ds\right\}\\
&\quad +\sum_{i=1}^2E\left\{
\int_0^T\underset{0\leq v \leq s}{\sup}\mid q_i(v)\mid^2 ds\right\}\Biggr]
\end{aligned}
\end{equation*}
and the establishment of our desired result 
is achieved by applying Gronwall's inequality.
\end{proof}

The following assumptions will be used in our next results:
\begin{enumerate}
\item[$(S_1)$] For all $0\leq t\leq T$, the partial derivatives 
$L_{S}$, $L_{I}$, $L_C$, $L_{A}$ and 
$h_{S}$ and $h_{I}$ are continuous, 
and there exists an imprecise parameter $R$ such that
$$ 
\mid L_{S} + L_{I} + L_{C}+L_{A}\mid 
\leq R (1+\mid S(t)\mid+\mid I(t)\mid+\mid C(t)\mid+\mid A(t)\mid).
$$
Moreover, 
$$
(1+\mid S(t)\mid )^{-1}\mid h_{S}\mid 
+ (1+\mid I(t)\mid )^{-1}\mid h_{I}\mid\leq R.
$$
	
\item[$(S_2)$] If $x(t), x' (t) \in \mathbb{R}_+^4$
for any $0\leq t\leq T$,  $u, u' \in U_{ad}$,
and function $u \mapsto L(x,u)$ is differentiable, 
then there exists an imprecise parameter $R$ such that
$$
\mid L(x(t),u(t))-L(x(t),u'(t))\mid+\mid L_{u}(x(t),u(t))
-L_{u}(x(t),u'(t))\mid\leq R \mid u(t)-u'(t)\mid.
$$
Moreover, if $h$ is differentiable, then
$$ 
\sum_{i=0}^4\mid h_{x_i}(x(t))- h_{x_i}(x'(t))\mid 
\leq R \sum_{i=0}^4\mid x_i(t)-x_i'(t)\mid.
$$
\end{enumerate}

\begin{lemma}
\label{3.5}
Let $(S_1)$ and $(S_2)$ hold. For any $1<\eta<2$ 
and $0<k<1$ satisfying $(1+k)\eta<2$, 
there exists a constant $R=R(\eta,k)$ such that 
for any $u,u'\in U_{ad}$ along with the corresponding 
trajectories $x,x'$ and adjoint multipliers $(p,q),(p',q')$, we have
\begin{equation*}
\sum_{i=1}^4E\left\{\int_0^T \mid p_i(t)-p'_i(t)\mid^\eta dt\right\}
+\sum_{i=1}^4E\left\{\int_0^T \mid q_i(t)-q'_i(t)\mid^\eta dt\right\}
\leq R d(u,u')^\frac{k\eta}{2}.
\end{equation*}
\end{lemma}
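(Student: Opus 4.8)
The strategy is to estimate the difference of the adjoint processes by writing out the backward stochastic differential equations (BSDEs) satisfied by $p_i - p_i'$ and $q_i - q_i'$, then combine the a~priori bounds of Lemma~\ref{3.4}, the state estimate of Lemma~\ref{3.3}, and the Lipschitz-type hypotheses $(S_1)$--$(S_2)$. First I would subtract the two Hamiltonian systems \eqref{10:a}: setting $\bar p_i = p_i - p_i'$, $\bar q_i = q_i - q_i'$, one has $d\bar p_i(t) = -\left(\partial_{x_i}H(x^*,u,p,q) - \partial_{x_i}H(x'^*,u',p',q')\right)dt + \bar q_i(t)\,dB_t$ with terminal value $\bar p_i(T) = -h_{x_i}(x(T)) + h_{x_i}(x'(T))$. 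The increment of $\partial_{x_i}H$ splits, via adding and subtracting intermediate terms, into (i) a part controlled by $\sum_j |\bar p_j| + \sum_j |\bar q_j|$ with bounded (imprecise) coefficients, since on $\Omega$ the coefficients $f$, $\sigma$ and their $x$-derivatives are bounded by Lemma~\ref{3.2}; (ii) a part controlled by $|x-x'|$ times the bounded multipliers $(p,q)$, together with the $L_{x_i}$ and $h_{x_i}$ differences, handled by $(S_1)$--$(S_2)$; and (iii) a part coming from the explicit $u$-dependence of $f_2,f_3$, namely terms involving $\chi_{u\neq u'}$.

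Next I would apply It\^o's formula to $|\bar p_i(t)|^2$ (or work with the $L^2$ theory of BSDEs directly), take expectations so the martingale part vanishes, and arrive at
\begin{equation*}
E|\bar p_i(t)|^2 + E\int_t^T |\bar q_i(s)|^2\,ds
\leq R\, E|\bar p_i(T)|^2 + R\int_t^T E|\bar p_i(s)|^2\,ds + R\int_t^T E\Bigl(\sum_j |\bar p_j|^2 + \tfrac12\sum_j|\bar q_j|^2\Bigr)ds + \Xi,
\end{equation*}
where $\Xi$ collects the inhomogeneous contributions: by $(S_2)$ the terminal term is bounded by $R\sum_i E|x_i(T)-x_i'(T)|^2$, and the driver contributions of types (ii)--(iii) are bounded, after Cauchy--Schwarz / H\"older with the exponent pairing dictated by $(1+k)\eta<2$ and the $L^\theta$-bounds on $(p,q)$ from Lemma~\ref{3.4}, by $R\,E\int_0^T |x-x'|^2 dt + R\, d(u,u')$. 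Summing over $i$, absorbing the $\tfrac12\sum|\bar q_j|^2$ term on a short interval $[T-\epsilon,T]$ with $\epsilon = 1/R$, and invoking Gronwall gives $\sum_i E\sup_{[T-\epsilon,T]}|\bar p_i|^2 + \sum_i E\int_{T-\epsilon}^T|\bar q_i|^2\,dt \leq R\bigl(\sum_i E\sup_{[0,T]}|x_i-x_i'|^2 + d(u,u')\bigr)$; stepping back over $[T-2\epsilon,T],[T-3\epsilon,T],\dots$ in finitely many steps yields the bound on all of $[0,T]$ in the $L^2$ scale. Finally I would feed in Lemma~\ref{3.3} with $\theta=1$ to replace $\sum_i E\sup|x_i-x_i'|^2$ by $R\,d(u,u')^{k}$, obtaining $\sum_i E\int_0^T|\bar p_i|^2dt + \sum_i E\int_0^T|\bar q_i|^2dt \leq R\, d(u,u')^{k}$.

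To pass from the $L^2$ estimate to the desired $L^\eta$ estimate with $1<\eta<2$, I would apply H\"older's inequality in $(t,\omega)$: $E\int_0^T |\bar p_i|^\eta dt \leq \bigl(E\int_0^T |\bar p_i|^2 dt\bigr)^{\eta/2}(ET)^{1-\eta/2} \leq R\bigl(d(u,u')^{k}\bigr)^{\eta/2} = R\, d(u,u')^{k\eta/2}$, and likewise for $\bar q_i$; summing over $i=1,\dots,4$ closes the proof. The condition $(1+k)\eta<2$ is exactly what is needed so that the intermediate H\"older splittings used to bound the type-(ii)/(iii) terms (where one factor carries the bounded multiplier in a higher $L^p$ norm via Lemma~\ref{3.4} and the other carries $\chi_{u\neq u'}$ or $|x-x'|$) have summable exponents; I would check this bookkeeping carefully. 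The main obstacle is precisely this term $\Xi$: controlling the inhomogeneous driver of the BSDE for $\bar p$, in particular the pieces where a multiplier $p$ or $q$ (only in $L^2$, by Lemma~\ref{3.4}, not uniformly bounded) multiplies a state difference $|x-x'|$ or an indicator $\chi_{u\neq u'}$, and doing so with exponents compatible with both Lemma~\ref{3.3} and the target exponent $\eta$ — everything else is a standard BSDE energy estimate plus Gronwall.
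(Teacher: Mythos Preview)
There is a genuine gap in the $L^2$ step. In your energy estimate for $|\bar p_i|^2$ you must control (after Young) the quantity $E\int_0^T|\bar f_i|^2\,dt$, where $\bar f_i$ contains products of the form $(x_j-x'_j)\,p'_l$ and $(x_j-x'_j)\,q'_l$. To make this small of order $d(u,u')^k$ you have to extract the smallness from the factor $x_j-x'_j$ via Lemma~\ref{3.3}; but Lemma~\ref{3.4} furnishes only second moments of $p',q'$ and nothing more. Any H\"older splitting of $E\int|x-x'|^2|p'|^2\,dt$ with conjugate exponents $(a,b)$ either requires $E|p'|^{2b}$ with $b>1$ (unavailable) or forces $b=1$, $a=\infty$, which returns the a.s.\ bound $|x-x'|\le R$ from Lemma~\ref{3.2} --- bounded, but not small in $d(u,u')$. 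Consequently the $L^2$ BSDE estimate does \emph{not} close with right-hand side $R\,d(u,u')^k$, and the interpolation down to $L^\eta$ never starts. Your final paragraph correctly flags $\Xi$ as ``the main obstacle'', but the bookkeeping does not merely need checking: it fails.

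The paper avoids this by a duality argument rather than an energy estimate. It introduces an auxiliary \emph{forward} linear SDE for a process $\phi=(\phi_1,\dots,\phi_4)$ whose drift and diffusion carry the terms $|\bar p_i|^{\eta-1}\sgn(\bar p_i)$ and $|\bar q_i|^{\eta-1}\sgn(\bar q_i)$, and applies It\^o's formula to $\sum_i\bar p_i\,\phi_i$. This produces $\sum_iE\int_0^T|\bar p_i|^\eta\,dt+\sum_iE\int_0^T|\bar q_i|^\eta\,dt$ directly on the left, paired on the right with $\sum_iE\int_0^T\phi_i\bar f_i\,dt$ and a terminal term. H\"older with the conjugate pair $(\eta,\eta_1)$, $\eta_1=\eta/(\eta-1)>2$, together with an $L^{\eta_1}$ bound on $\phi$, reduces matters to estimating $E\int_0^T|\bar f_i|^\eta\,dt$. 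For $\eta<2$ the product $(x-x')\,p'$ \emph{can} be placed in $L^\eta$: one uses H\"older with exponents $\tfrac{2}{2-\eta}$ and $\tfrac{2}{\eta}$, the $L^2$ bound on $p',q'$ from Lemma~\ref{3.4}, and Lemma~\ref{3.3} at exponent $\tfrac{2\eta}{2-\eta}$; the admissibility condition for Lemma~\ref{3.3} at that exponent is exactly $(1+k)\eta<2$. In short, the hypothesis $(1+k)\eta<2$ is consumed \emph{inside} the $L^\eta$ estimate of $\bar f$, not in a post-hoc interpolation, and working in $L^\eta$ from the outset is what makes the moment budget on $(p',q')$ sufficient.
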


\begin{proof}
Let $\bar{p}_i(t)=p_i(t)-p'_i(t)$, $i\in\{1,2,3,4\}$, and  
$\bar{q}_i(t)=q_i(t)-q'_i(t)$, $i\in\{1,2\}$.
It follows from \eqref{10:a} that
\begin{equation*}
\begin{cases}
d\bar{p}_1(t)
= -[(-\beta_k(x_2+(\eta_C)_kx_3+(\eta_A)_kx_4)-\mu_k)\bar{p}_1
+(\beta_k(x_2+(\eta_C)_kx_3+(\eta_A)_kx_4)-\mu_k)\bar{p}_2\\
\quad -\delta_k(x_2+(\eta_C)_kx_3+(\eta_A)_kx_4-\mu_k)\bar{q}_1
+\delta_k(x_2+(\eta_C)_kx_3+(\eta_A)_kx_4)\bar{q}_2+\bar{f}_1]dt
+\bar{q}_1 dB,\\
d\bar{p}_2(t)=-[-\beta_k x_1 \bar{p}_1+(\beta_k x_1-\epsilon_3
-\frac{m_ku}{(1+\delta_k x_2)^2})\bar{p}_2\\
\quad +\left(\phi+\frac{m_ku}{(1+\gamma_k x_2)^2}\bar{p}_3
+e\bar{p}_4-\delta x_1\right)\bar{q}_1
+\delta_k x_1\bar{q}_2+\bar{f}_2]dt+\bar{q}_2 dB,\\
d\bar{p}_3(t) 
=-[-\beta_k (\eta_C)_k x_1 \bar{p}_1+\beta_k (\eta_C)_k x_1 \bar{p}_2
-\epsilon_2 \bar{p}_3-\delta_k (\eta_C)_k x_1 \bar{q}_1
+\delta_k \eta_C x_1 \bar{q}_2+\bar{f}_3]dt,\\
d\bar{p}_4(t) 
=-[-\beta_k (\eta_A)_k x_1 \bar{p}_1
+\beta_k (\eta_A)_k x_1 \bar{p}_2-(\epsilon_1)_k\bar{p}_4
-\delta_k (\eta_A)_k x_1 \bar{q}_1
+\delta_k (\eta_A)_k x_1 \bar{q}_2+\bar{f}_4]dt,
\end{cases}
\end{equation*}
where
\begin{equation*}
\left\lbrace
\begin{aligned}
\bar{f}_1 &=\beta_k[(x_2+(\eta_C)_kx_3+(\eta_A)_kx_4)
-(x'_2+(\eta_C)_kx'_3+(\eta_A)_kx'_4)](p'_2-p'_1)\\
&-(\sigma)_k\left[(x_2+(\eta_C)_kx_3+(\eta_A)_kx_4)
-(x'_2+(\eta_C)_kx'_3+(\eta_A)_kx'_4)\right](q'_2-q'_1)\\
&-L_{x_1}(x,u)+L_{x_1}(x',u'),\\
\bar{f}_2 &= (\beta)_k(x_1-x'_1)(p'_2-p'_1)
+\left[\frac{m_ku}{1+(\eta)_k x_2}
-\frac{(m_ku')_k}{1+(\eta)_k x'_2}\right](p'_3-p'_2)\\
&+(\delta)_k(x_1-x'_1)(q'_2-q'_1)-L_{x_2}(x,u)+L_{x_2}(x',u'),\\
\bar{f}_3 &= (\beta)_k (\eta_C)_k(x_1-x'_1)(p'_2-p'_1)
+ (\delta)_k (\eta_C)_k(x_1-x'_1)(q'_2-q'_1)-L_{x_3}(x,u)+L_{x_3}(x',u'),\\
\bar{f}_4 &= (\beta)_k (\eta_A)_k(x_1-x'_1)(p'_2-p'_1)
+ (\delta)_k (\eta_A)_k(x_1-x'_1)(q'_2-q'_1)-L_{x_4}(x,u)+L_{x_4}(x',u').
\end{aligned}
\right.
\end{equation*}
Let $\phi(t)=\left(\phi_1(t),\phi_2(t),\phi_3(t),\phi_4(t)\right)$
be the solution of the following linear stochastic differential equation: 
\begin{equation}
\label{17}
\left\lbrace
\begin{aligned}
d\phi_1(t) 
&=\Bigl[-(\beta)_k(x_2+(\eta_C)_k x_3+(\eta_A)_k x_4)\phi_1
-(\beta)_k x_1 \phi_2-(\beta)_k (\eta_C)_k 
x_1 \phi_3-\beta_k  (\eta_A)_k x_1 \phi_4\\
&+\mid \bar{p}_1\mid^{\eta-1} \sgn( \bar{p}_1)\Bigr]dt 
+[-(\delta)_k(x_2+(\eta_C)_k x_3+(\eta_A)_k x_4)\phi_1\\
& -(\delta)_k x_1 \phi_2-(\sigma)_k (\eta_C)_k x_1 \phi_3-(\delta)_k
(\eta_A)_k x_1 \phi_4+\mid \bar{q}_1\mid^{\eta-1} \sgn( \bar{q}_1)]dB,\\
d\phi_2(t) &=[(\beta)_k(x_2+(\eta_C)_k x_3+(\eta_A)_k x_4-(\mu)_k)\phi_1
+((\beta)_k x_1-(\epsilon_3)_k-\frac{m_ku}{(1+(\gamma)_k x_2)^2}\phi_2\\
&+(\beta)_k (\eta_C)_k x_1 \phi_3
+\beta_k (\eta_A)_k x_1 \phi_4+\mid \bar{p}_2\mid^{\eta-1} 
\sgn( \bar{p}_2)]dt+[(\delta)_k(x_2+(\eta_C)_k x_3\\
&+(\eta_A)_k x_4)\phi_1+(\delta)_k x_1\phi_2+(\delta)_k (\eta_C)_k
x_1\phi_3+(\delta)_k (\eta_A)_k x_1\phi_4
+\mid \bar{q}_2\mid^{\eta-1} \sgn( \bar{q}_2)]dB,\\
d\phi_3(t) &=[\phi_k+\frac{m_ku}{(1+(\gamma)_k x_2)^2}
-(\epsilon_2)_k\phi_3+\mid \bar{p}_3\mid^{\eta-1} \sgn( \bar{p}_3)]dt,\\
d\phi_4(t) &=[e_k\phi_2-(\epsilon)_k\phi_4+\mid \bar{p}_4\mid^{\eta-1} 
\sgn( \bar{p}_4)]dt.
\end{aligned}
\right.
\end{equation}
Lemma~\ref{3.4} and hypothesis $(S_1)$ show the existence 
and uniqueness of solution to system \eqref{17}.
Using the Cauchy-Schwartz inequality, we obtain the following statement:
$$ 
\sum_{i=1}^4 E\underset{0 \leq t \leq T}{\sup}
\mid \phi_i(t)\mid^{\eta_1}\leq \sum_{i=1}^4 E\int_0^T
\mid \bar{p}_i\mid^\eta dt+\sum_{i=1}^2 E\int_0^T\mid \bar{q}_i\mid^\eta dt,
$$
where $\eta_1>2$ and $\frac{1}{\eta_1}+\frac{1}{\eta}=1$. Set 
$V(\bar{p},\phi)= \displaystyle \sum_{i=1}^4\bar{p}_i\phi_i(t)$.
Using It\^{o}'s formula, we have
\begin{equation*}
\begin{aligned}
\sum_{i=1}^4 E & \int_0^T \mid \bar{p}_i\mid^\eta dt
+\sum_{i=1}^2 E\int_0^T \mid \bar{q}_i\mid^\eta dt\\
&= -\sum_{i=1}^4 E\int_0^T\phi_i\bar{f}_i dt
+\sum_{i=1}^4E\phi_i(T)\left[ h_{x_i}(x(T))-h_{x_i}(x'(T))\right]\\
&\leq R \sum_{i=1}^4 \left( E\int_0^T \mid \bar{f}_i\mid^\eta dt 
\right) ^{\frac{1}{\eta}}\left( E\int_0^T 
\mid \phi_i\mid^{\eta_1}dt\right) ^{\frac{1}{\eta_1}}\\
& \quad + R \sum_{i=1}^4 \left( E \left[  h_{x_i}(x(T))-h_{x_i}(x'(T))
\right]^{\eta} \right)^{\frac{1}{\eta}}\times \left( E \mid \phi_i(T) 
\mid^{\eta_1} \right) ^{\frac{1}{\eta_1}} \\
& \leq R \left(  \sum_{i=1}^4 E\int_0^T \mid \bar{p}_i\mid^\eta dt
+\sum_{i=1}^2 E\int_0^T \mid \bar{q}_i\mid^{\eta} dt \right)^{\frac{1}{\eta_1}}\\
& \quad \times \left(\sum_{i=1}^4 \left( E \int_0^T \mid \bar{f}_i
\mid^{\eta} dt \right) ^{\frac{1}{\eta}}+\sum_{i=1}^4\left( 
E \mid h_{x_i}(x(T)-h_{x_i}(x'(T))\mid ^\eta)^{\frac{1}\eta}\right) \right).
\end{aligned}
\end{equation*}
By the elementary inequality \eqref{el:ineq}, we have
\begin{multline*}
\sum_{i=1}^4 E\int_0^T \mid \bar{p}_i\mid^\eta dt
+\sum_{i=1}^2 E\int_0^T \mid \bar{q}_i\mid^\eta dt\\
\leq R\left[ \sum_{i=1}^4E \int_0^T \mid \bar{f}_i\mid^\eta dt
+\sum_{i=1}^4E\mid h_{x_i}(x(T))-h_{x_i}(x'(T))\mid^\eta\right].
\end{multline*}
Using $(S_2)$ and Lemma~\ref{3.3}, we obtain that
$$
\sum_{i=1}^4E\mid h_{x_i}(x(T))-h_{x_i}(x'(T))\mid^\eta
\leq R^\eta\sum_{i=1}^4E\mid x_i(t)-x'_i(t)\mid^\eta]
\leq R d(u,u')^{\frac{k\eta}{2}}.
$$
It follows from the Cauchy--Schwartz inequality that
\begin{equation*}
\begin{aligned}
E\int_0^T \mid \bar{f}_i\mid^\eta dt
& \leq R \Biggl[ E\int_0^T\mid x_2-x'_2\mid^\eta\left( 
\sum_{i=1}^4\mid p'_i\mid^\eta+\sum_{i=1}^2\mid q'_i\mid^\eta\right) dt \\
& \qquad\quad + E\int_0^T\mid L_{x_1}(x,u)- L_{x_1}(x',u')\mid ^\eta dt\Biggr] \\
& \leq R\left(  E\int_0^T\mid x_2-x'_2\mid ^{\frac{2\eta}{2-\eta}}dt
\right)^{1-\frac{\eta}{2}}\\
&\qquad\quad \times \left[ \left( \sum_{i=1}^2
\int_0^T \mid p'_i\mid^2 dt\right) ^{\frac{\eta}{2}}
+\left( \sum_{i=1}^2\int_0^T 
\mid q'_i\mid^2 dt\right)^{\frac{\eta}{2}}\right]  
+ R d(u,u')^{\frac{k\eta}{2}}.
\end{aligned}
\end{equation*}
Observe that $\frac{2\eta}{1-\eta}<1$, 
$1-\frac{\eta}{2}>\frac{k\eta}{2}$, and $d(u,u')<1$.
It comes from \eqref{3.4} and \eqref{3.5} that
$$
E\left[\int_0^T \mid \bar{f}_1\mid^\eta dt\right]
\leq d(u,u')^{\frac{k\eta}{2}}.
$$
Similarly, and omitting the details, 
$$
\sum_{i=1}^4E\int_0^T \mid \bar{f}_i\mid^\eta dt
\leq R d(u,u')^{\frac{k\eta}{2}}.
$$
It results that
$$
\sum_{i=1}^4E\int_0^T \mid \bar{p}_i\mid^\eta dt
+\sum_{i=1}^2E\int_0^T \mid 
\bar{q}_i\mid^\eta dt \leq R d(u,u')^{\frac{k\eta}{2}}
$$
and the proof is complete.
\end{proof}


\subsection{Necessary condition for near-optimal control}
\label{sec3.3}

Our necessary condition for the near-optimal control 
of system \eqref{13} makes use of the 
following classical result.

\begin{lemma}[Ekeland's variational principle \cite{R6}]
\label{3.1}
Let $(Q,d)$ be a complete metric space and $F :Q \rightarrow \mathbb{R}$ 
be a lower-semi continuous function bounded from below. For any $\epsilon>0$, 
assume that $u^\epsilon\in Q$ satisfies
\begin{equation}
\label{12}
F(u^\epsilon)\leq \underset {u \in Q}{\inf}F(u)+\epsilon.
\end{equation}
Then, there exists a $u^\lambda\in Q$, $\lambda>0$, 
such that for all $u\in Q$ one has
$$
F(u^\lambda)\leq F(u^\epsilon),
\quad d(u^\lambda,u^\epsilon) \leq \lambda,
\quad F(u^\lambda) \leq F(u)
+\frac{\epsilon}{\lambda}d(u^\lambda,u^\epsilon).
$$
\end{lemma}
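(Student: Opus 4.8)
The plan is to realise $u^\lambda$ as a $\preceq$-minimal element of the partial order on $Q$ defined by
$$
v\preceq w \quad\Longleftrightarrow\quad F(v)+\tfrac{\epsilon}{\lambda}\,d(v,w)\leq F(w).
$$
First I would verify that $\preceq$ is genuinely a partial order: reflexivity is immediate; antisymmetry holds because $v\preceq w$ and $w\preceq v$ together force $\tfrac{2\epsilon}{\lambda}d(v,w)\leq 0$; and transitivity follows by adding the two defining inequalities and using the triangle inequality for $d$. The whole point of this reformulation is that \emph{any} $\preceq$-minimal $u^\lambda$ lying $\preceq$-below $u^\epsilon$ automatically satisfies all three conclusions: from $u^\lambda\preceq u^\epsilon$ one reads off $F(u^\lambda)\leq F(u^\epsilon)$, and, using \eqref{12} together with $F(u^\lambda)\geq\inf_Q F$, also $\tfrac{\epsilon}{\lambda}d(u^\lambda,u^\epsilon)\leq F(u^\epsilon)-F(u^\lambda)\leq\epsilon$, i.e. $d(u^\lambda,u^\epsilon)\leq\lambda$; while minimality is precisely the assertion that no $u\neq u^\lambda$ obeys $F(u)+\tfrac{\epsilon}{\lambda}d(u,u^\lambda)\leq F(u^\lambda)$, which is the third inequality.

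Next I would produce such a minimal element by the standard Ekeland iteration. Starting from $u_0=u^\epsilon$, and given $u_n$, I set $S_n=\{v\in Q:\ v\preceq u_n\}$ (nonempty, as $u_n\in S_n$) and choose $u_{n+1}\in S_n$ with $F(u_{n+1})\leq\inf_{S_n}F+2^{-n}$; this is legitimate because $F$ is bounded below, so every such infimum is finite. The sequence $(u_n)$ is $\preceq$-decreasing, hence $(F(u_n))$ is nonincreasing and bounded below, hence convergent; and the estimate $\tfrac{\epsilon}{\lambda}d(u_n,u_m)\leq F(u_n)-F(u_m)$ for $m>n$ (obtained from transitivity, since $u_m\preceq u_n$) shows $(u_n)$ is Cauchy. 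By completeness of $(Q,d)$ it converges to some $u^\lambda$, and letting $m\to\infty$ in $F(u_m)+\tfrac{\epsilon}{\lambda}d(u_m,u_n)\leq F(u_n)$ and invoking the lower semicontinuity of $F$ (and continuity of $d$) gives $u^\lambda\preceq u_n$ for every $n$; in particular $u^\lambda\preceq u_0=u^\epsilon$.

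Finally I would check minimality: if $v\preceq u^\lambda$, then $v\preceq u_n$ for all $n$ by transitivity, so $v\in S_n$ and $F(v)\geq\inf_{S_n}F\geq F(u_{n+1})-2^{-n}\geq F(u^\lambda)-2^{-n}$, using monotonicity of $(F(u_k))$ and $\lim_k F(u_k)\geq F(u^\lambda)$ (lower semicontinuity); letting $n\to\infty$ yields $F(v)\geq F(u^\lambda)$, which together with $F(v)+\tfrac{\epsilon}{\lambda}d(v,u^\lambda)\leq F(u^\lambda)$ forces $d(v,u^\lambda)=0$, i.e. $v=u^\lambda$. I expect the only delicate point to be the limiting step in the construction: ensuring the telescoped distances are summable — which is exactly where boundedness below of $F$ enters, via $\sum_n d(u_n,u_{n+1})\leq\tfrac{\lambda}{\epsilon}\bigl(F(u_0)-\inf_Q F\bigr)$ — and that the inequality defining $\preceq$ survives passage to the limit by lower semicontinuity; the rest is routine bookkeeping, and no structure on $Q$ beyond completeness is used. (Alternatively, one could skip the explicit iteration and extract the minimal element directly from Zorn's lemma applied to $\preceq$, checking that every $\preceq$-chain admits a lower bound via the same Cauchy estimate.)
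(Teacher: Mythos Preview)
The paper does not prove this lemma at all: it is quoted as a classical result, with a citation to Ekeland's original paper~\cite{R6}, and the text moves immediately on to Theorem~\ref{Th2}. So there is nothing to compare against on the paper's side.

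Your argument is a correct and standard proof of Ekeland's variational principle via the partial order $v\preceq w\Leftrightarrow F(v)+\tfrac{\epsilon}{\lambda}d(v,w)\leq F(w)$ and the usual descent iteration. The verification of the order axioms, the Cauchy estimate $\tfrac{\epsilon}{\lambda}d(u_m,u_n)\leq F(u_n)-F(u_m)$, the passage to the limit using lower semicontinuity, and the minimality check are all handled correctly. One small remark: the third conclusion as printed in the paper reads $F(u^\lambda)\leq F(u)+\tfrac{\epsilon}{\lambda}d(u^\lambda,u^\epsilon)$, which appears to be a typographical slip; the meaningful (and standard) statement is $F(u^\lambda)\leq F(u)+\tfrac{\epsilon}{\lambda}d(u^\lambda,u)$ for all $u\in Q$, and that is exactly what your minimality argument establishes.
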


\begin{theorem}
\label{Th2}
Let $(S_1)$ and $(S_2)$ hold, $x \mapsto L(x,u)$ and $x \mapsto h(x)$ 
be convex almost surely, and $(p^\epsilon(t)$, $q^\epsilon(t))$ 
be the solution of the adjoint equation under control $u^\epsilon$. 
Then there exists an imprecise parameter $R$
such that for any $\eta\in[0,1]$, $\epsilon>0$ and any $\epsilon$-optimal 
pair $\left(x^\epsilon(t),u^\epsilon(t)\right)$, the following condition holds:
\begin{multline*}
\underset{u\in U_{ad}}{\inf}E\left\{\int_0^T \left(
\frac{m_ku(t)I^\epsilon(t)}{1+\gamma_k I^\epsilon(t)}(p_3^\epsilon(t)
-p_2^\epsilon(t))+L(x^\epsilon(t),u(t))\right)dt\right\}\\
\geq E\left\{\int_0^T \left(\frac{m_ku^\epsilon(t)I^\epsilon(t)}{1+\gamma_k
I^\epsilon(t)}(p_3^\epsilon(t)-p_2^\epsilon(t))
+L(x^\epsilon(t),u^\epsilon(t))\right)dt\right\}
-R\epsilon^{\frac{\eta}{3}}.
\end{multline*}
\end{theorem}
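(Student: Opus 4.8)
The plan is to apply Ekeland's variational principle (Lemma~\ref{3.1}) on the complete metric space $(U_{ad}, d)$ with the functional $F(u) = J(0,x_0,u)$, which is bounded below by $V(0,x_0)$ and can be shown to be continuous with respect to $d$ using the state estimates of Lemmas~\ref{3.2} and~\ref{3.3}. Since $(x^\epsilon, u^\epsilon)$ is an $\epsilon$-optimal pair, the control $u^\epsilon$ satisfies $F(u^\epsilon) \leq \inf_{u \in U_{ad}} F(u) + \epsilon$, so Ekeland's principle produces a control $u^\lambda$ (I would take $\lambda = \epsilon^{2/3}$, to be optimized at the end) with $d(u^\lambda, u^\epsilon) \leq \lambda$ and such that $u^\lambda$ is a genuine minimizer of the perturbed functional $\widetilde{F}(u) = F(u) + \frac{\epsilon}{\lambda} d(u, u^\lambda)$ over $U_{ad}$.

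Next I would exploit that $u^\lambda$ exactly minimizes $\widetilde{F}$ to derive a variational inequality. The plan is to use spike/convex perturbations of $u^\lambda$: for $u \in U_{ad}$ and $\rho \in (0,1)$, form $u^\rho$ agreeing with $u$ on a set of measure $\rho T$ and with $u^\lambda$ elsewhere, so that $d(u^\rho, u^\lambda) \leq \rho T$. From $\widetilde{F}(u^\rho) \geq \widetilde{F}(u^\lambda)$, dividing by $\rho$ and letting $\rho \to 0$, together with the stochastic Pontryagin maximum principle \eqref{10:a}--\eqref{10:b} applied along the trajectory $x^\lambda$ of $u^\lambda$, yields
\begin{equation*}
E\left\{\int_0^T \mathcal{H}^\lambda(t,u^\lambda(t))\,dt\right\}
\leq E\left\{\int_0^T \mathcal{H}^\lambda(t,u(t))\,dt\right\} + \frac{\epsilon}{\lambda}\, T
\end{equation*}
for all $u \in U_{ad}$, where $\mathcal{H}^\lambda(t,v) = \frac{m_k v I^\lambda(t)}{1+\gamma_k I^\lambda(t)}(p_3^\lambda(t)-p_2^\lambda(t)) + L(x^\lambda(t),v)$ is the $u$-dependent part of the (negative) Hamiltonian evaluated at the $\lambda$-data. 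Here the only $u$-dependence in the dynamics \eqref{13} and cost \eqref{6} is through the term $\frac{m_k u I}{1+\gamma_k I}$ and through $L$, which is why only $p_2^\lambda, p_3^\lambda$ and $L$ enter.

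The final step is to pass from the $\lambda$-data back to the $\epsilon$-data. I would write the difference between $E\int_0^T \mathcal{H}^\epsilon(t,v(t))\,dt$ and $E\int_0^T \mathcal{H}^\lambda(t,v(t))\,dt$ (for $v = u^\epsilon$ and for arbitrary $v$) and bound it using: assumption $(S_2)$ and Lemma~\ref{3.3} to control $L(x^\epsilon,\cdot) - L(x^\lambda,\cdot)$ and $I^\epsilon - I^\lambda$ in terms of $d(u^\epsilon,u^\lambda)^{k/2} \leq \lambda^{k/2}$; Lemma~\ref{3.5} to control $\|p_i^\epsilon - p_i^\lambda\|$ in $L^\eta$ by $d(u^\epsilon,u^\lambda)^{k\eta/(2)}$; Lemma~\ref{3.4} and Lemma~\ref{3.2} to keep the remaining factors ($p_i^\lambda$, $I^\lambda$, and the bounded quantities $\frac{m_k}{1+\gamma_k I}$) uniformly bounded; and Hölder/Cauchy--Schwarz to combine these. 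Collecting all error terms gives a bound of the form $R(\lambda^{k/2} + \lambda^{\theta} + \epsilon/\lambda)$ for appropriate exponents; choosing $\lambda = \epsilon^{2/3}$ and using $\eta \in [0,1]$ makes every term $\mathcal{O}(\epsilon^{\eta/3})$, which is the claimed $R\epsilon^{\eta/3}$.

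The main obstacle I expect is the continuity/differentiability bookkeeping in the perturbation argument: justifying that $\rho \mapsto x^\rho$ is differentiable at $\rho = 0$ in the appropriate $L^p$ sense (a variational equation / first-order expansion of the state), and that the resulting derivative couples correctly with the adjoint $(p^\lambda, q^\lambda)$ via It\^o's formula so that the diffusion terms (which do not depend on $u$) drop out and only the drift term $\frac{m_k u I}{1+\gamma_k I}$ survives. This is where the Burkholder--Davis--Gundy inequality and the uniform moment bound of Lemma~\ref{3.2} are needed to control the stochastic integrals uniformly in $\rho$. Once that expansion is in hand, the remaining estimates are routine applications of Lemmas~\ref{3.3}--\ref{3.5} and Hölder's inequality, and the final optimization over $\lambda$ is elementary.
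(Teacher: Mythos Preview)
Your proposal is correct and follows essentially the same route as the paper: apply Ekeland's principle on $(U_{ad},d)$ with $\lambda=\epsilon^{2/3}$ to obtain an exact minimizer $\bar u^\epsilon$ of the penalized functional, perform a spike-variation expansion and couple it with the adjoint via It\^o's formula to get the Hamiltonian inequality at $\bar u^\epsilon$, then transfer back to $u^\epsilon$ using Lemmas~\ref{3.3}--\ref{3.5} and H\"older's inequality. Your identification of the $u$-dependent part of the Hamiltonian (only $p_2^\epsilon,p_3^\epsilon$ and $L$ enter, since the diffusion is control-free) and your bookkeeping of the error exponents match the paper's argument.
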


\begin{proof}
The function $J(x_0,\cdot):U_{ad}\rightarrow\mathbb{R}$ 
is continuous under the metric $d$ defined by \eqref{eq:metric:d}. 
Applying Lemma~\ref{3.1}, and taking $\lambda=\epsilon^{\frac{2}{3}}$, there exists an 
admissible pair $\left(\bar{x}^\epsilon(t),\bar{u}^\epsilon(t)\right)$ such that
\begin{equation}
d(u^\epsilon(t),\bar{u}^\epsilon(t))\leq \epsilon^{\frac{2}{3}},
\end{equation}
\begin{equation}
\label{21}
J\left(0,x_0,\bar{u}^\epsilon(t)\right)
\leq \bar{J}(0,x_0,u)
\end{equation}
for all $u\in U_{ad}[0,T]$, where
\begin{equation}
\label{22}
\bar{J}(0,x_0,u)= J(0,x_0,u)+\epsilon^{\frac{1}{3}} 
d\left(u^\epsilon,\bar{u}^\epsilon\right).
\end{equation}
This is equivalent to the fact that $\left(\bar{x}^\epsilon(t),\bar{u}^\epsilon(t)\right)$ 
is an $\epsilon$-optimal pair for the system \eqref{13} under the cost functional \eqref{6}.
Moreover, a necessary condition for $\left(\bar{x}^\epsilon(t),\bar{u}^\epsilon(t)\right)$ 
is deduced by the following setting. Let $\bar{t} \in [0,T]$, 
$\rho > 0$ and $u \in U_{ad}[0,T]$. We define
$u^\rho(t)=u(t)$ if $ t\in[\bar{t},\bar{t}+\rho]$, and 
$u^\rho(t)=\bar{u}^\epsilon(t)$ if 
$t\in [0,T] \backslash \left[\bar{t},\bar{t}+\rho\right]$.
We deduce from equations \eqref{21} and \eqref{22} that
\begin{equation}
\label{23}
\bar{J}(0,x_0,\bar{u}^\epsilon) \leq \bar{J}(0,x_0,u^\rho)
\end{equation}
and
\begin{equation}
\label{24}
d(u^\rho,\bar{u}^\epsilon)\leq \epsilon^{\frac{2}{3}}.
\end{equation}
It comes from \eqref{23}, \eqref{24} and Taylor's expansion that
\begin{equation}
\begin{aligned}
-\rho\epsilon^{\frac{1}{3}}
&\leq J(0,x_0,u^\rho)-J(0,x_0,\bar{u}^\epsilon)\\
&= E \int_0^T \left[ L(x^\rho,u^\rho)-L(\bar{x}^\epsilon,
\bar{u}^\epsilon) \right]dt+E[h(x^\rho(T))-h(\bar{x}^\epsilon(T))]\\
&\leq \sum_{i=1}^4E\left\{\int_0^TL_{x_i}(\bar{x}^\epsilon,
u^\rho)\left(x_i^\rho-\bar{x}_i^\epsilon\right)dt\right\}\\
&\quad +E\left\{\int_{\bar{t}}^{\bar{t}+\rho}[L(\bar{x}^\epsilon,u)
-L(\bar{x}^\epsilon,\bar{u}^\epsilon)]dt\right\}\\
&\quad +\sum_{i=1}^4E\left[h_{x_i}(\bar{x}^\epsilon(T))(x_i^\rho(T)
-\bar{x}_i^\epsilon(t))\right]+o(\rho).
\end{aligned}
\end{equation}
The It\^{o} formula applied on 
$\sum_{i=1}^4 \bar{p}_i^\epsilon(x_i^\rho- \bar{x}_i^\epsilon)$ 
and the use of  Lemmas~\ref{3.2} and \ref{3.4} yield
$$ 
\sum_{i=1}^4 E\left[h_{x_i}(x^\rho(T))(x_i^\rho(T)
-\bar{x}_i^\epsilon(T))\right]
\leq E\left\{\int_{\bar{t}}^{\bar{t}+\rho} 
\left[(u^\rho-\bar{u}^\epsilon)(\bar{p}_3^\epsilon)
-\bar{p}_1^\epsilon+(u^\rho-\bar{u}^\epsilon)
\bar{p}_3^\epsilon\right]dt\right\}.
$$
Subsequently,
\begin{equation}
\begin{aligned}
-\rho\epsilon^{\frac{1}{3}} 
&\leq J(0,x_0,u^\rho)-J(0,x_0,\bar{u}^\epsilon)\\
&\quad +E\left\{\int_{\bar{t}}^{\bar{t}+\rho}\left[
L(\bar{x}^\rho,u)-L(\bar{x}^\epsilon,\bar{u}^\epsilon)\right]dt\right\}\\
&\quad +E\left\{\int_{\bar{t}}^{\bar{t}+\rho}[(u(t)-\bar{u}^\epsilon)(t)]
\left[\left(\bar{p}_3(t)-\bar{p}_1(t)\right)
+\left(u(t)-\bar{u}^\epsilon\right)
\bar{p}_3^\epsilon\right]dt\right\}+o(\rho).
\end{aligned}
\end{equation}
Dividing by $\rho$ and letting $\rho\rightarrow 0$, we have
\begin{multline}
-\epsilon^{\frac{1}{3}}
\leq E\left[L(\bar{x}^\epsilon(t),u(\bar{t}))
-L(\bar{x}^\epsilon(t),\bar{u}^\epsilon(t))\right]\\
+E\left[\left(u(\bar{t})-\bar{u}^\epsilon(t)\right)
\left(\left(\bar{p}_3(t)-\bar{p}_1(t)\right)
+\left(u(t)-\bar{u}^\epsilon\right)\right)
\bar{p}_3^\epsilon(t)\right].
\end{multline}
We estimate the following variation:
\begin{equation*}
\begin{aligned}
E&\left\{\int_0^T\left[(u^\rho(t)-\bar{u}^\epsilon(t))
\bar{p}_3^\epsilon(t)-\left(u^\rho(t)-u^\epsilon(t)\right)
p_3^\epsilon(t)\right]dt\right\}\\
&=E\left\{\int_0^T\left(\bar{p}_3^\epsilon(t)-p_3^\epsilon\right)
\left(u^\rho(t)-u^\epsilon(t)\right)dt\right\}
+\left\{\int_0^T \left(p_3^\epsilon(t)\left(u^\epsilon(t)
-\bar{u}^\epsilon\right)(t)\right)dt\right\}\\
& =I_1+I_2.
\end{aligned}
\end{equation*}
Using Lemma~\ref{3.5}, we conclude that for $0<k<1$ and $1<\eta<2$ 
verifying $(1+k)\eta<2$ one has
\begin{equation*}
\begin{aligned}
I_1 & \leq \left(E\int_0^T \mid \bar{p}_3^\epsilon(t)
-p_3^\epsilon(t)\mid ^\eta dt\right)^{\frac{1}{\eta}}\left(
E\int_0^T \mid u^\rho(t)-u^\epsilon(t)\mid^{
\frac{\eta}{\eta-1}} dt\right)^{\frac{\eta - 1}{\eta}}\\
& \leq R\left(d\left(u^\rho,u^\epsilon\right)^{\frac{k\eta}{2}}\right)^{\frac{1}{\eta}}
\left(E\int_0^T \left(\mid u^\rho(t)\mid ^{\frac{\eta}{\eta-1}}
+\mid u^\epsilon(t)\mid^{\frac{\eta}{\eta-1}}\right)
dt\right)^{\frac{\eta-1}{\eta}}\\
& \leq R\epsilon^{\frac{k}{3}}
\end{aligned}
\end{equation*}
and
\begin{equation*}
\begin{aligned}
I_2 
& \leq R\left(E\int_0^T \mid p_3^\epsilon(t)\mid ^2 dt\right)^{\frac{1}{2}}
\left(E\int_0^T \mid u^\epsilon(t)-\bar{u}^\epsilon(t)\mid^2
\chi_{u^\epsilon(t)\neq \bar{u}^\epsilon(t)}(t)dt\right)^{\frac{1}{2}}\\
& \leq R\left(E\int_0^T \left(\mid u^\epsilon(t)\mid^4
+\mid \bar{u}^\epsilon(t)\mid ^4\right)dt\right)^{\frac{1}{4}}
\left(E\int_0^T\chi_{u^\epsilon(t)
\neq \bar{u}^\epsilon(t)}(t)dt\right)^{\frac{1}{4}}.
\end{aligned}
\end{equation*}
Thus,
\begin{equation}
E\int_0^T\left[\left(u^\rho(t)-\bar{u}^\epsilon(t)\right)
\left(\bar{p}_3^\epsilon(t)-\left(\bar{p}_1^\epsilon(t)\right)
-\left(u^\rho(t)-u^\epsilon(t)\right)
p_3^{\epsilon}(t)\right)\right]dt
\leq \epsilon^{\frac{k}{3}}.
\end{equation}
With a similar argument, we obtain that
\begin{equation}
\begin{aligned}
E&\left\{\int_0^T\left[\left(u^\rho(t)-\bar{u}^\epsilon(t)\right)
\left(\bar{p}_3^\epsilon(t)\bar{p}_1^\epsilon(t)\right)
-u^\rho(t)-u^\epsilon(t)
\left(p_3^\epsilon(t)-p_1^\epsilon(t)\right)\right]dt\right\}\\
& +E\left\{\int_0^T[L(\bar{x}^\epsilon(t),u^\rho(t))
-L(\bar{x}^\epsilon(t),\bar{u}^\epsilon(t))]
-\left[L\left(x^\epsilon(t),u^\rho(t)\right)
-L\left(x^\epsilon(t),u^\epsilon(t)\right)\right]dt\right\}\\
& \leq \epsilon^{\frac{k}{3}}.
\end{aligned}
\end{equation}
We obtain the desired result from the above inequalities. 
\end{proof}


\subsection{Sufficient condition for near-optimal control}
\label{sec3.4}

Besides $(S_1)$ and $(S_2)$, now we also impose a further hypothesis:
\begin{enumerate}
\item[$(S_3)$] The set $U$ where the control takes values is convex.
\end{enumerate}

\begin{theorem}
\label{Th3}
Suppose that hypothesis $(S_1)$, $(S_2)$ and $(S_3)$ hold. 
Let $\left(x^\epsilon(t),u^\epsilon(t)\right)$ be an admissible pair 
and $\left(p^\epsilon(t),q^\epsilon(t)\right)$ be the solution of the 
adjoint equation corresponding to $\left(x^\epsilon(t),u^\epsilon(t)\right)$.
Assume that $(x,u) \mapsto H(t,x,u,p,q)$ and $x \mapsto h(x)$ are convex almost surely. 
If for some $\epsilon>0$,
\begin{multline*}
E\int_0^T\left(\frac{m_ku^\epsilon(t)x_2^\epsilon(t)}{1
+\gamma_kx_2^\epsilon(t)}\left(p_3^\epsilon-p_2^\epsilon\right)
+L\left(x^\epsilon(t),u^\epsilon(t)\right)\right)dt\\
\leq \underset{u\in U_{ad}}{\inf}
E\int_0^T \frac{m_ku(t)x_2^\epsilon(t)}{1
+\gamma_kx_2^\epsilon(t)}\left(p_3^\epsilon-p_2^\epsilon\right)
+L\left(x^\epsilon(t),u(t)\right)dt+\epsilon,
\end{multline*}
then
$J(0,x_0,u^\epsilon)
\leq \underset{u \in U_{ad}}{\inf}
J(0,x_0,u)+ R\epsilon^{\frac{1}{3}}$.
\end{theorem}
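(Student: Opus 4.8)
The plan is to derive the sufficient condition by a convexity argument coupled with the estimates already established, following the classical Zhou-type scheme for near-optimal sufficient conditions. First I would consider an arbitrary admissible control $u \in U_{ad}$ with corresponding state trajectory $x$, and compare $J(0,x_0,u^\epsilon)$ against $J(0,x_0,u)$. The key device is to introduce the Hamiltonian $H$ defined in \eqref{9} and to use the convexity of $(x,u)\mapsto H(t,x,u,p^\epsilon,q^\epsilon)$ and of $x\mapsto h(x)$ together with the adjoint equation satisfied by $(p^\epsilon,q^\epsilon)$. Applying It\^{o}'s formula to $\sum_{i=1}^4 p_i^\epsilon(t)\bigl(x_i(t)-x_i^\epsilon(t)\bigr)$ between $0$ and $T$, taking expectations (the stochastic integral terms vanish by the bounds of Lemma~\ref{3.4} and Lemma~\ref{3.2}), and using the terminal condition $p_i^\epsilon(T)=-h_{x_i}(x^\epsilon(T))$, one obtains an identity linking $E[h(x(T))-h(x^\epsilon(T))]$ to an integral involving the partial derivatives of $H$.

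The second step is to exploit convexity. From $x\mapsto h(x)$ convex we get $h(x(T))-h(x^\epsilon(T))\geq \sum_i h_{x_i}(x^\epsilon(T))(x_i(T)-x_i^\epsilon(T))$, while from $(x,u)\mapsto H(t,x,u,p^\epsilon,q^\epsilon)$ convex we get a subgradient inequality at $(x^\epsilon,u^\epsilon)$. Combining these with the It\^{o} identity and recalling that $H = \langle f,p\rangle + \langle \sigma,q\rangle - L$, the terms $\langle f,p^\epsilon\rangle$ and $\langle\sigma,q^\epsilon\rangle$ telescope against the drift and diffusion of the adjoint and state equations, leaving
\begin{equation*}
J(0,x_0,u^\epsilon) - J(0,x_0,u) \leq E\int_0^T \Bigl[ H_u\bigl(t,x^\epsilon,u^\epsilon,p^\epsilon,q^\epsilon\bigr)\bigl(u^\epsilon - u\bigr) \Bigr]\,dt + (\text{error terms}).
\end{equation*}
Since in our model only the control-dependent term $-\tfrac{m_k u\, x_2}{1+\gamma_k x_2}$ (appearing with opposite signs in $f_2$ and $f_3$) and $L$ carry the control, $H_u(u^\epsilon-u)$ reduces, after integration, precisely to the difference of the two bracketed quantities appearing in the hypothesis of the theorem. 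The assumed near-minimality hypothesis then bounds this contribution by $\epsilon$.

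The third step is to control the error terms generated by replacing $x$ (the true trajectory under $u$) by $x^\epsilon$ inside the Hamiltonian-minimization expression: the hypothesis is stated with $x_2^\epsilon$ rather than $x_2$, so one must estimate differences such as $E\int_0^T \bigl|\tfrac{m_k u\, x_2}{1+\gamma_k x_2} - \tfrac{m_k u\, x_2^\epsilon}{1+\gamma_k x_2^\epsilon}\bigr|\,|p_3^\epsilon - p_2^\epsilon|\,dt$ and $E\int_0^T |L(x,u)-L(x^\epsilon,u)|\,dt$. Using $(S_1)$, $(S_2)$, the Cauchy--Schwartz and H\"{o}lder inequalities, Lemma~\ref{3.3} (with an appropriate choice of $\theta$ and $k$), and Lemma~\ref{3.4} to bound $p^\epsilon, q^\epsilon$, each such term is $O\bigl(d(u,u^\epsilon)^{k/2}\bigr)$ or, after optimizing the Ekeland parameter as in the proof of Theorem~\ref{Th2}, $O(\epsilon^{1/3})$. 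Collecting everything and taking the infimum over $u\in U_{ad}$ yields
\begin{equation*}
J(0,x_0,u^\epsilon) \leq \inf_{u\in U_{ad}} J(0,x_0,u) + R\epsilon^{1/3},
\end{equation*}
as claimed.

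\textbf{Main obstacle.} The delicate point is the third step: matching the powers of $\epsilon$. One must verify that the Lipschitz-in-state estimates for $L$, $h$, and the rational control term, when fed the $d(u,u^\epsilon)^{k\theta}$-type bounds of Lemmas~\ref{3.3} and \ref{3.5}, actually produce an exponent at least $1/3$ after the Ekeland radius $\lambda=\epsilon^{2/3}$ is inserted; this requires choosing the auxiliary parameters $\theta$, $k$, $\eta$ consistently across all the lemmas invoked, and checking that the constraint $(1+k)\eta<2$ from Lemma~\ref{3.5} together with $k\theta<1$ leaves enough room. The convexity bookkeeping in the first two steps is routine once the adjoint equation is written out, but the accounting of error exponents is where care is needed.
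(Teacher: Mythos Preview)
Your first two steps are essentially the paper's: decompose $J(0,x_0,u^\epsilon)-J(0,x_0,u)$ via the Hamiltonian, apply convexity of $H$ and $h$, and use It\^{o}'s formula on $\sum_i p_i^\epsilon(x_i^\epsilon-x_i)$ to arrive at
\[
J(0,x_0,u^\epsilon)-J(0,x_0,u)\;\le\;E\int_0^T H_u\bigl(t,x^\epsilon,u^\epsilon,p^\epsilon,q^\epsilon\bigr)\,(u^\epsilon-u)\,dt.
\]
The divergence is in what you do next, and there is a genuine gap.

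You assert that ``$H_u(u^\epsilon-u)$ reduces, after integration, precisely to the difference of the two bracketed quantities'' in the hypothesis, and that the near-minimality assumption then bounds it by~$\epsilon$. This is not correct. The control-dependent part of $\langle f,p^\epsilon\rangle$ is indeed linear in $u$, but $L$ need not be; hence $H_u(u^\epsilon-u)$ does \emph{not} equal the difference $G(u^\epsilon)-G(u)$ of the bracketed expressions. More seriously, even using that $H$ is convex in $u$ you only get $H_u|_{u^\epsilon}(u^\epsilon-u)\ge H(x^\epsilon,u^\epsilon)-H(x^\epsilon,u)$, which together with the hypothesis yields a \emph{lower} bound $E\int H_u(u^\epsilon-u)\,dt\ge -\epsilon$. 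That is the wrong direction: you need an \emph{upper} bound on this integral.

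Your ``third step'' then attacks the wrong problem. There are no error terms coming from replacing $x$ by $x^\epsilon$ --- the inequality above already has $x^\epsilon$ everywhere --- and Lemmas~\ref{3.3} and \ref{3.5} (which compare trajectories under nearby controls) are irrelevant here because $u$ is an \emph{arbitrary} admissible control, so $d(u,u^\epsilon)$ is not small. The Ekeland radius $\lambda=\epsilon^{2/3}$ you invoke belongs to the proof of Theorem~\ref{Th2}, not this one.

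What the paper actually does to close the argument is apply Ekeland's variational principle a \emph{second} time, now to the functional $F(u)=E\int_0^T H(t,x^\epsilon,u,p^\epsilon,q^\epsilon)\,dt$ on $U_{ad}$ equipped with the weighted metric $\bar d(u,u')=E\int_0^T y^\epsilon|u-u'|\,dt$, where $y^\epsilon=1+\sum_i|p_i^\epsilon|+\sum_i|q_i^\epsilon|$. The hypothesis says $u^\epsilon$ is an $\epsilon$-minimizer of $F$, so Ekeland (with $\lambda=\epsilon^{1/2}$) produces $\bar u^\epsilon$ with $\bar d(u^\epsilon,\bar u^\epsilon)\le\epsilon^{1/2}$ at which the first-order optimality condition gives the pointwise bound $|H_u|\le 2\epsilon^{1/2}y^\epsilon$. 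This two-sided control of $H_u$, combined with Lemma~\ref{3.4} (to bound $E\int y^\epsilon\,dt$) and H\"older's inequality, is what finally yields the upper estimate on $E\int H_u(u^\epsilon-u)\,dt$. Your proposal is missing precisely this Ekeland step, and without it the argument does not go through.
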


\begin{proof}
From the definition of the Hamiltonian function \eqref{9}, we have
\begin{equation}
\label{30}
\begin{aligned}
J(0,x_0,u^\epsilon)-J(0,x_0,u)
=I_1(t)+I_2(t)+I_3(t)
\end{aligned}
\end{equation}
with
\begin{equation}
\begin{aligned}
I_1= & E\int_0^T[H(t,x^\epsilon(t),u^\epsilon(t),p^\epsilon(t),q^\epsilon(t))
-H(t,x(t),u(t),p^\epsilon(t),q^\epsilon(t))]dt,\\
I_2= & E[h(x^\epsilon(T))-h(x(T))],\\
I_3= & E\int_0^T\Biggl[(f^\top(x^\epsilon(t),u^\epsilon(t))
-f^\top(x(t),u(t)))p^\epsilon(t)\\
&\qquad +(\sigma^\top(x^\epsilon(t),u^\epsilon(t))
-\sigma^\top(x(t),u(t)))q^\epsilon(t)\Biggr]dt.
\end{aligned}
\end{equation}
Using the convexity of $H$, we obtain that
\begin{multline}
\label{32}
I_1 \leq \sum_{i=1}^4E
\int_0^T H_{x_i}\left(t,x^\epsilon(t),u^\epsilon(t),
p^\epsilon(t),q^\epsilon(t)\right)
\left(x_i^\epsilon(t)-x_i(t)\right)dt\\
+ E\int_0^T 
H_u\left(t,x(t),u(t),p^\epsilon(t),q^\epsilon(t)\right)
\left(u^\epsilon(t)-u(t)\right)dt.
\end{multline}
Similarly,
\begin{equation}
\label{33}
I_2 \leq \sum_{i=1}^4E(h_{x_i}(x^\epsilon(T)-x_i(T))).
\end{equation}
The It\^{o}'s formula acting on $\sum_{i=1}^4p_i^\epsilon(t)(x_i^\epsilon(t)-x_i(t))$ 
yields
\begin{equation*}
\begin{aligned}
\sum_{i=1}^4E\left(h_{x_i}\left(x^\epsilon(T)-x_i(T)\right)\right)
&= -\sum_{i=1}^4E \int_0^T\left(x_i^\epsilon(t)-x_i(t)\right)
H_{x_i}\left(t,x^\epsilon(t),u^\epsilon(t),
p^\epsilon(t),q^\epsilon(t)\right)dt\\
& \quad +\sum_{i=1}^4 E\int_0^T p_i^\epsilon(t)\mid 
f_i\left(x^\epsilon(t),u^\epsilon(t)\right)
-f_i\left(x(t),u(t)\right)\mid dt\\
&\quad +\sum_{i=1}^4 E \int_0^T
q_i^\epsilon(t)\mid\sigma_i(x^\epsilon(t))
-\sigma(x(t))\mid dt.
\end{aligned}
\end{equation*}
Hence,
\begin{multline}
\label{34}
I_3=\sum_{i=1}^4 
E\left(h_{x_i}(x^\epsilon(T))(x_i^\epsilon(T)-x_i(T))\right)\\
+\sum_{i=1}^4 E \int_0^T \left(x_i^\epsilon(t)-x_i(t)\right)
\cdot H_{x_i}\left(t,x^\epsilon(t),u^\epsilon(t),
p^\epsilon(t),q^\epsilon(t)\right)dt.
\end{multline}
Replacing equations \eqref{32}, \eqref{33} and \eqref{34} 
into \eqref{30}, we obtain that
\begin{equation}
\label{eq:rhs:int:Hu}
J(0,x_0,u^\epsilon)-J(0,x_0,u) 
\leq E\int_0^T H_u\left(t,x^\epsilon(t),u^\epsilon(t),
p^\epsilon(t),q^\epsilon(t)\right)
\left(u^\epsilon(t)-u(t)\right) dt.
\end{equation}
To finish the proof, we need to estimate the right-hand side of \eqref{eq:rhs:int:Hu}.
Consider the metric $\bar{d}$ on $U_{ad}$ defined by
\begin{equation}
\label{eq:3.27}
\bar{d}(u,u')=E\int_0^Ty^\epsilon(t)\mid u(t)-u'(t)\mid dt,
\end{equation}
where
\begin{equation}
\label{eq:3.28}
y^\epsilon(t)=1+\sum_{i=1}^4\mid p_i^\epsilon(t)\mid
+\sum_{i=1}^2\mid q_i^\epsilon(t)\mid.
\end{equation}
It is straightforward to state that $\bar{d}$ 
is a complete metric as a weighted $L^1$ norm.
Let functional 
$F(\cdot) : U_{ad}\rightarrow\mathbb{R}$ be defined by
\begin{equation}
\label{eq:3.30}
F(u^\epsilon)= E\int_0^T
H\left(t,x^\epsilon(t),u^\epsilon(t),p^\epsilon(t),q^\epsilon(t)\right)dt.
\end{equation}
Using $(S_2)$, we see that $F(\cdot)$ is continuous with respect to the metric $\bar{d}$.
Thus, from Lemma~\ref{3.1}, there exists a $\bar{u}^\epsilon \in U_{ad}$ such that
\begin{equation}
\label{eq:3.31}
\begin{aligned}
& \bar{d}(u^\epsilon,\bar{u}^\epsilon)
\leq \epsilon^{\frac{1}{2}},\\
& F(\bar{u}^\epsilon)\leq F(u)+\epsilon^{\frac{1}{2}} 
\bar{d}(u^\epsilon,\bar{u}^\epsilon),
\end{aligned}
\end{equation}
for any $u\in U_{ad}$. Replacing 
\eqref{eq:3.30} into \eqref{eq:3.31}, 
it follows from the differentiability 
of function $H$ with respect to $u$ 
and hypothesis $(S_1)$ that
\begin{equation}
\label{44}
H_u\left(t,x^\epsilon(t),u^\epsilon(t),p^\epsilon(t),
q^\epsilon(t)\right)+\lambda_1^\epsilon(t)=0
\end{equation}
with 
\begin{equation}
\lambda_1^\epsilon(t)\in\left[-\epsilon^{\frac{1}{2}}
y^\epsilon(t),\epsilon^{\frac{1}{2}}y^\epsilon(t)\right].
\end{equation}
We conclude that
\begin{equation}
\label{eq:3.35}
\begin{aligned}
\mid H_u\left(t,x^\epsilon(t),u^\epsilon(t), 
p^\epsilon(t),q^\epsilon(t)\right)\mid 
&=  \mid\lambda_1^\epsilon(t)\mid\\
& \leq  2 \epsilon^{\frac{1}{2}} \left|y^\epsilon(t)\right|.
\end{aligned}
\end{equation}
The result follows from \eqref{eq:rhs:int:Hu}, \eqref{eq:3.28},
\eqref{eq:3.35}, Lemma~\ref{3.4}, and H\"{o}lder's inequality.
\end{proof}


\section{Conclusion}
\label{sec4}

Parameter values are usually considered to be precisely known 
in epidemic mathematical modeling. However, often they are imprecise 
due to various uncertainties. Therefore, here we have proposed
the near-optimal control of a stochastic epidemic 
SICA model with imprecise parameters. By using some mathematical
inequalities, namely Cauchy-Schwartz's, Gronwall's and
Burkholder--Davis--Gundy inequalities, we proved some estimates 
of the state and co-state variables in order to investigate
necessary and sufficient conditions of near-optimality. 

As future work, we plan to develop numerical methods
for near-optimal control of the stochastic SICA model
with imprecise parameters. This is under investigation 
and will be addressed elsewhere.


\section*{Acknowledgments}

This work is part of first author's PhD project,
which is carried out at University of Aveiro. The authors 
were supported by the Portuguese Foundation for Science 
and Technology (FCT) under Grant No. UIDB/04106/2020 (CIDMA).
Valuable comments and suggestions of improvement, 
from Reviewer and Editor, are here gratefully acknowledged.



\end{document}